\title{\vspace{-8pt}Homological vanishing for the Steinberg representation}
\author{Avner Ash \and Andrew Putman\thanks{AP is supported in part by NSF grants DMS-1255350 and DMS-1737434.} \and Steven V Sam\thanks{SS is supported in part by NSF grant DMS-1500069.}}
\date{November 20, 2017}
\newcommand{\arxiv}[1]{\href{http://arxiv.org/abs/#1}{{\tt arXiv:#1}}}
\apptocmd{\thebibliography}{\raggedright}{}{}
\numberwithin{equation}{section}
\theoremstyle{plain}
\newtheorem{theorem}{Theorem}[section]
\newtheorem{lemma}[theorem]{Lemma}
\newtheorem{corollary}[theorem]{Corollary}
\newtheorem{conjecture}[theorem]{Conjecture}
\theoremstyle{definition}
\newtheorem{defn}[theorem]{Definition}
\newenvironment{definition}[1][]{\begin{defn}[#1]\pushQED{\qed}}{\popQED \end{defn}}
\newtheorem{rmk}[theorem]{Remark}
\newenvironment{remark}[1][]{\begin{rmk}[#1] \pushQED{\qed}}{\popQED \end{rmk}}
\newtheorem{eg}[theorem]{Example}
\DeclareMathOperator{\Image}{Im}
\DeclareMathOperator{\Sp}{Sp}
\DeclareMathOperator{\SO}{SO}
\DeclareMathOperator{\GL}{GL}
\DeclareMathOperator{\SL}{SL}
\DeclareMathOperator{\PSL}{PSL}
\newcommand\Proj{\ensuremath{\mathbb{P}}}
\newcommand\C{\ensuremath{\mathbb{C}}}
\newcommand\Z{\ensuremath{\mathbb{Z}}}
\newcommand\Q{\ensuremath{\mathbb{Q}}}
\newcommand\Field{\ensuremath{\mathrm{k}}}
\DeclareMathOperator{\HH}{H}
\newcommand\RH{\ensuremath{\widetilde{\HH}}}
\newcommand\Span[1]{\ensuremath{\langle #1 \rangle}}
\newcommand\Set[2]{\ensuremath{\{\text{#1 $|$ #2}\}}}
\newcommand\GG{\ensuremath{\mathbf{G}}}
\newcommand\Para{\ensuremath{\mathbf{PG}}}
\newcommand\Levi{\ensuremath{\mathbf{LG}}}
\newcommand\Uni{\ensuremath{\mathbf{UG}}}
\newcommand\ParaGL{\ensuremath{\mathbf{PGL}}}
\newcommand\UniGL{\ensuremath{\mathbf{UGL}}}
\newcommand\Stab{\ensuremath{\mathbf{FG}}}
\newcommand\Cpx{\ensuremath{\mathbf{CG}}}
\newcommand\hpartial{\ensuremath{\widehat{\partial}}}
\newcommand\hkappa{\ensuremath{\widehat{\kappa}}}
\newcommand\Tits{\ensuremath{\mathcal{T}}}
\DeclareMathOperator{\St}{St}
\DeclareMathOperator{\Res}{Res}
\DeclareMathOperator{\Ind}{Ind}
\newcommand\Apartment[1]{\ensuremath{\begin{Vmatrix} #1 \end{Vmatrix}}}
\begin{document}

\maketitle

\vspace{-24pt}
\begin{abstract}
\noindent
For a field $\Field$, we prove that the $i$th homology of the groups $\GL_n(\Field)$, $\SL_n(\Field)$, $\Sp_{2n}(\Field)$, $\SO_{n,n}(\Field)$, and $\SO_{n,n+1}(\Field)$ with coefficients in their Steinberg representations vanish for $n \geq 2i+2$.
\end{abstract}

\section{Introduction}

Let $\GG$ be a connected reductive group over a field $\Field$.  A basic geometric object associated to $\GG(\Field)$
is its Tits building.  By definition, this is the simplicial complex $\Tits_{\GG}(\Field)$ whose $i$-simplices
are increasing sequences
\[
0 \subsetneq P_0 \subsetneq \cdots \subsetneq P_i \subsetneq \GG(\Field)
\]
of parabolic $\Field$-subgroups of $\GG(\Field)$.  Letting $r$ be the semisimple $\Field$-rank of $\GG$, the complex
$\Tits_{\GG}(\Field)$ is $(r-1)$-dimensional, and the Solomon--Tits theorem \cite[Theorem IV.5.2]{BrownBuildings}
says that in fact $\Tits_{\GG}(\Field)$ is homotopy equivalent to a wedge of $(r-1)$-dimensional spheres.  Letting
$R$ be a commutative ring, the {\em Steinberg representation} of $\GG(\Field)$ over $R$, denoted $\St_{\GG}(\Field;R)$,
is $\RH_{r-1}(\Tits_{\GG}(\Field);R)$.
This is one of the most important representations of $\GG(\Field)$; for instance, if
$\GG$ is any of the classical groups in Theorem \ref{theorem:vanishing} below (e.g.\ $\GG = \SL_n$) and 
 $\Field$ is a finite field
of characteristic $p$, then $\St_{\GG}(\Field;\C)$ is the unique irreducible representation of $\GG(\Field)$ whose dimension is a positive power of $p$ (see \cite{MZ}, which proves this aside from three small cases that must be checked by hand). 
  See \cite{HumphreysSurvey} for a survey of many results concerning the Steinberg representation.
  
The twisted homology groups $\HH_i(\GG(\Field);\St_{\GG}(\Field;R))$ play an interesting role
in algebraic K-theory; see \cite[Theorem 3]{QuillenFinite}.
If $\GG(\Field)$ is a finite group of Lie type, then $\St_{\GG}(\Field;\Field)$ is a
projective $\GG(\Field)$-module (see \cite{HumphreysSurvey}), and thus
the homology groups
$\HH_i(\GG(\Field);\St_{\GG}(\Field;\Field))$ all vanish.  However, it is definitely
not the case that $\St_{\GG}(\Field;R)$ is projective for a general commutative ring $R$,
and if $\Field$ is an infinite field then  $\St_{\GG}(\Field;\Field)$ need not  be projective.  Our main theorem says that nevertheless for the classical groups, the
homology groups $\HH_i(\GG(\Field);\St_{\GG}(\Field;R))$ always vanish in a stable range.

\begin{theorem} \label{theorem:vanishing}
Let $\GG_n$ be either $\GL_{n}$, $\SL_{n}$, $\Sp_{2n}$, $\SO_{n,n}$, or $\SO_{n,n+1}$.
Then for all fields $\Field$ and all commutative rings $R$, we have
$\HH_i(\GG_n(\Field);\St_{\GG_n}(\Field;R)) = 0$ for $n \geq 2i+2$. Furthermore, there exists a surjection
$\HH_i(\GG_{2i}(\Field);\St_{\GG_{2i}}(\Field;R)) \to \HH_i(\GG_{2i+1}(\Field);\St_{\GG_{2i+1}}(\Field;R))$.
\end{theorem}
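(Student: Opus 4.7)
The plan is to prove Theorem \ref{theorem:vanishing} by induction on $n$, using an equivariant spectral sequence coming from the action of $\GG_n(\Field)$ on a highly connected simplicial complex whose simplex stabilizers involve smaller classical groups of the same type. For $\GL_n$ and $\SL_n$ the natural candidate is the complex $\Bases_n$ of split partial bases --- whose $p$-simplices are $(p+1)$-tuples of vectors in $\Field^n$ spanning a rank-$(p+1)$ direct summand --- known to be roughly $(n/2)$-connected by a theorem of van der Kallen. For $\Sp_{2n}$, $\SO_{n,n}$, and $\SO_{n,n+1}$ one replaces this by the analogous complex of isotropic or hyperbolic partial frames in the standard representation. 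The resulting connectivity bound dictates the range $n \geq 2i+2$ in the theorem.

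In each case, the stabilizer $G_\sigma \leq \GG_n(\Field)$ of a $p$-simplex $\sigma$ has a Levi quotient of the form $\GG_{n-k}(\Field)$ (with $k$ depending on how much rank the simplex uses up), together with a unipotent or diagonal complement. The equivariant spectral sequence for this action takes the form
\[
E^1_{p,q} \;=\; \bigoplus_{[\sigma]} \HH_q\bigl(G_\sigma;\, \St_{\GG_n}(\Field;R)\bigr) \;\Longrightarrow\; \HH_{p+q}\bigl(\GG_n(\Field);\, \St_{\GG_n}(\Field;R)\bigr),
\]
where $[\sigma]$ runs over $\GG_n(\Field)$-orbits of $p$-simplices, and high connectivity forces the abutment to vanish in the range of interest.

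The key remaining ingredient is a restriction formula identifying $\St_{\GG_n}(\Field;R)\vert_{G_\sigma}$ in terms of $\St_{\GG_{n-k}}(\Field;R)$. The plan is to exploit the Solomon--Tits description of $\Tits_{\GG_n}(\Field)$: the portion of the building supported ``away from'' $\sigma$ should contribute the Steinberg representation of the smaller classical group, while the remaining portion is trivialized by the unipotent radical of $G_\sigma$. Combined with Shapiro's lemma, this should reduce each $E^1_{p,q}$ term to $\HH_q(\GG_{n-k}(\Field); \St_{\GG_{n-k}}(\Field;R))$ (possibly twisted by a free module), which vanishes by induction whenever $n-k \geq 2q+2$. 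The surjection in the boundary case $n = 2i+1$ then falls out of an analysis of the edge map of the same spectral sequence.

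The main obstacle I anticipate is this restriction identification. Stabilizers of partial frames are not honest parabolic subgroups of $\GG_n(\Field)$, so the clean Solomon--Tits splitting of $\St$ on parabolics does not apply directly. Handling this will likely require an auxiliary double complex --- combining $\Bases_n$ (or its classical analogue) with the chain complex of $\Tits_{\GG_n}(\Field)$ --- whose two convergent spectral sequences interact to produce the needed formula relating the Steinberg representations of $\GG_n$ and $\GG_{n-k}$. Establishing the correct vanishing on the auxiliary $E^1$-page, uniformly across the five families of classical groups, is where the bulk of the technical work is likely to reside.
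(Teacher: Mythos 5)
Your overall geometric setup matches the paper's --- the complex of partial (isotropic) bases, the $\GG_n(\Field)$-equivariant spectral sequence, and the recognition that restriction of $\St_{\GG_n}$ to simplex stabilizers must be related to $\St_{\GG_{n-k}}$ --- but the plan as stated has a structural gap that the paper's proof is specifically engineered to avoid, and your proposed induction will not close.

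The problem is the $p=0$ column. Running the spectral sequence for the action on the partial-bases complex and applying Shapiro and your restriction formula, the $E^1_{0,i}$ term becomes (up to tensoring with a free module) $\HH_i(\GG_{n-1}(\Field);\St_{\GG_{n-1}}(\Field;R))$. This has the \emph{same} homological degree $i$, so it cannot be killed by induction on $i$; and if you instead induct on $n$, you would need $n-1 \geq 2i+2$, i.e.\ $n\geq 2i+3$, so the bound degrades and you never reach $n=2i+2$. You write that ``high connectivity forces the abutment to vanish,'' but connectivity only identifies the equivariant abutment with $\HH_{p+q}(\GG_n;\St_{\GG_n})$ in a range; it cannot make it vanish --- vanishing must come from the $E^1$ or $E^\infty$ terms, and the $(0,i)$ term is exactly the one your induction cannot reach. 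The paper's key idea is to \emph{not} try to kill the $p=0$ column. Instead one first proves a ``reduction to stability'' lemma: if the stabilization map $\HH_i(\GG_{n-1};\St_{\GG_{n-1}})\to\HH_i(\GG_n;\St_{\GG_n})$ is surjective for $n\geq N$, then $\HH_i(\GG_n;\St_{\GG_n})=0$ for $n\geq N+1$ (this uses that the two stabilizations factor through $\GL_2(\Field)\times\GG_{n-2}(\Field)\subset\Levi_n^2$, and $\St_{\GL_2}$ has vanishing $\SL_2$-coinvariants). Then in the spectral sequence one shows $E^1_{p,q}=0$ only for $p+q=i$, $p\geq 1$; the surviving corner $E^\infty_{0,i}$, a quotient of $\HH_i(\Stab_n^1;\St_{\GG_n})\cong\HH_i(\GG_{n-1};\St_{\GG_{n-1}})$, surjects onto the abutment, giving exactly the stabilization surjection. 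Without this reformulation, the argument cannot get off the ground.

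A second, smaller gap: even after the reduction to stability, the borderline $p=1$, $n=2i+1$ entry $E^1_{1,i-1}$ does \emph{not} vanish by the inductive hypothesis (the rank $n-2=2i-1$ is one short of $2(i-1)+2$), so it has to be shown that the differential $E^1_{2,i-1}\to E^1_{1,i-1}$ is surjective. In the paper this requires a nontrivial computation with apartment classes in $\St_{\GL_3}$ and $\St_{\GL_2}$; ``falls out of the edge map'' undersells it considerably. Finally, on the restriction formula you are seeking: the paper derives it not by an auxiliary double complex but by combining Reeder's theorem --- that $\Res^{\GG_n}_{\Para_n^\ell}\St_{\GG_n}\cong\Ind^{\Para_n^\ell}_{\Levi_n^\ell}\St_{\Levi_n^\ell}$ --- with the Mackey double-coset formula for $\Stab_n^\ell\subset\Para_n^\ell$ (a single double coset, since the unipotent radical lies in $\Stab_n^\ell$). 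Your double-complex idea is not obviously wrong, but the Reeder route is substantially shorter and gives the isomorphism on the nose, not just a vanishing statement, which is needed to identify the edge map with the stabilization map.
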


\begin{remark}
When $\text{char}(\Field)=2$, the groups $\SO_{n,n}(\Field)$ and $\SO_{n,n+1}(\Field)$ in
Theorem \ref{theorem:vanishing} are to be taken naively as the stabilizers of appropriate quadratic
forms (see \S \ref{section:varioussubgroups} below); we ignore the Dickson invariant.
\end{remark}

Theorem \ref{theorem:vanishing} (and its proof)
is motivated by the following conjecture of Church--Farb--Putman.
Recall that Borel--Serre \cite{BorelSerreCorners} proved
that the virtual cohomological dimension of $\SL_n(\Z)$ is $\binom{n}{2}$.

\begin{conjecture}[{\cite[Conjecture 2]{ChurchFarbPutmanConjecture}}]
\label{conjecture:vanishing1}
We have $\HH^{\binom{n}{2}-i}(\SL_n(\Z);\Q) = 0$ for $n \geq i+2$.
\end{conjecture}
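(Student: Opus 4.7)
The plan is to first apply Borel--Serre duality to convert the cohomological statement into a homological one. Borel and Serre showed that $\SL_n(\Z)$ is a virtual duality group of virtual cohomological dimension $\binom{n}{2}$ with dualizing module the rational Steinberg module $\St_{\SL_n}(\Q;\Z)$. Consequently,
\[
\HH^{\binom{n}{2}-i}(\SL_n(\Z);\Q) \;\cong\; \HH_i(\SL_n(\Z);\St_{\SL_n}(\Q;\Q)),
\]
and the conjecture becomes the claim that $\HH_i(\SL_n(\Z);\St_{\SL_n}(\Q;\Q)) = 0$ for $n \geq i+2$.

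Formally this mirrors Theorem \ref{theorem:vanishing} applied to $\SL_n$, but with two crucial differences: the acting group is the arithmetic group $\SL_n(\Z)$ rather than $\SL_n(\Q)$, and the predicted stable range is roughly twice as large. I would attempt to imitate whatever equivariant spectral sequence argument proves Theorem \ref{theorem:vanishing}. The natural input over a field is the action of $\SL_n(\Field)$ on a highly connected complex of partial frames or flags, with simplex stabilizers involving $\SL_k(\Field)$ for $k < n$; an induction on $n$ then analyzes the $E^1$ page. Over $\Z$, the analog is the action of $\SL_n(\Z)$ on the complex of partial unimodular bases of $\Z^n$, which is known to be highly connected by Maazen's work and its refinements, so at least the geometric scaffolding carries over.

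The hard part is controlling the restriction of $\St_{\SL_n}(\Q;\Q)$ to the stabilizer of a partial integral basis. Over a field, the unipotent radical of a parabolic acts trivially on each Levi-summand of the Steinberg representation, producing a clean inductive input; over $\Z$ this decomposition breaks down, and the integral apartment classes that generate the Steinberg module (conjecturally, by another Church--Farb--Putman conjecture, verified only in low rank) satisfy complicated relations. Gaining enough control of these relations to feed the induction is the principal obstruction. Absent a new idea, I expect this strategy to recover only the weaker range $n \geq 2i+2$ matching Theorem \ref{theorem:vanishing}; reaching the conjectured range $n \geq i+2$ likely requires exploiting some feature specific to $\Z$ being a PID --- for instance, a more economical resolution of $\St_{\SL_n}(\Q;\Q)$ by integral apartment classes than is available over a general field.
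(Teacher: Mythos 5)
This statement is a \emph{conjecture}, not a theorem, and the paper does not prove it; it explicitly records that it is known only for $i=0$ (Lee--Szczarba) and $i=1$ (Church--Putman) and is open for $i\ge 2$. There is therefore no proof in the paper to compare against, and your write-up correctly does not claim to supply one. Your reduction via Borel--Serre duality to the equivalent homological statement $\HH_i(\SL_n(\Z);\St_{\SL_n}(\Q;\Q))=0$ for $n\ge i+2$ is exactly the paper's Conjecture~\ref{conjecture:vanishing2}, and your diagnosis of the obstruction is accurate: the paper's argument for Theorem~\ref{theorem:vanishing} hinges on Reeder's decomposition (Theorem~\ref{theorem:reeder}), which gives $\Res^{\GG}_{\Para}\St_{\GG}$ as an induced module from a Levi and thereby produces the clean isomorphism of Lemma~\ref{lemma:vectstab} identifying $E^1_{p,q}$ with $\St_{\GL_{p+1}}\otimes\HH_q(\GG_{n-p-1};\St_{\GG_{n-p-1}})$. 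Over $\Z$ the stabilizer of a partial unimodular basis is not a parabolic of $\SL_n(\Z)$ and the restriction of $\St_{\SL_n}(\Q;\Q)$ to it has no such induced-module description, so the inductive engine stalls at precisely the step you name. One small correction of emphasis: even if one could push the field argument through over $\Z$, it is built to yield the range $n\ge 2i+2$, not $n\ge i+2$; the stronger range would require not just surviving the loss of Reeder's theorem but also a genuinely sharper stability mechanism, as you suspect. Your proposal is thus an honest assessment of the state of the problem rather than a proof, and it is consistent with how the paper itself treats this conjecture.
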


\noindent
In other words, the rational cohomology of $\SL_n(\Z)$ vanishes in codimension $i$ as long as $n$ is
sufficiently large. Conjecture \ref{conjecture:vanishing1} was proved for $i=0$ by Lee--Szczarba \cite{LeeSzczarba} and
for $i=1$ by Church--Putman \cite{ChurchPutmanCodimensionOne}.  It is open for $i \geq 2$.

To connect Conjecture \ref{conjecture:vanishing1} to Theorem \ref{theorem:vanishing}, recall
that Borel--Serre \cite{BorelSerreCorners}
proved that $\SL_n(\Z)$ satisfies a version of Poincar\'{e}--Lefschetz duality called
virtual Bieri--Eckmann duality.   This duality involves a ``dualizing module'' that measures
the ``homology at infinity''.  In our situation, that dualizing module is the Steinberg representation
$\St_{\SL_n}(\Q;\Q)$ and we have
\[
\HH^{\binom{n}{2}-i}(\SL_n(\Z);\Q) = \HH_i(\SL_n(\Z);\St_{\SL_n}(\Q;\Q)).
\]
Conjecture~\ref{conjecture:vanishing1} is thus equivalent to the following conjecture, which resembles Theorem~\ref{theorem:vanishing} for $\GG_n = \SL_n$.

\begin{conjecture}
\label{conjecture:vanishing2}
We have $\HH_i(\SL_n(\Z);\St_{\SL_n}(\Q;\Q)) = 0$ for $n \geq i+2$.
\end{conjecture}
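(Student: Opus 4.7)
The plan is to imitate the proof of Theorem~\ref{theorem:vanishing} for $\GG_n = \SL_n$ and $\Field = \Q$, while tracking the restriction to the arithmetic subgroup $\SL_n(\Z) \subset \SL_n(\Q)$. Since $\St_{\SL_n}(\Q;\Q) = \RH_{n-2}(\Tits_{\SL_n}(\Q);\Q)$ and the Tits building is $(n-3)$-connected by Solomon--Tits, its augmented reduced chain complex is an $\SL_n(\Z)$-equivariant resolution of $\St_{\SL_n}(\Q;\Q)$ sitting in degrees $0$ through $n-2$. By Smith normal form, $\SL_n(\Z)$ acts transitively on simplices of each flag type $\lambda$, with stabilizer equal to the corresponding integral parabolic subgroup $P_\lambda(\Z)$. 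This produces an equivariant spectral sequence
\[
E^1_{p,q} = \bigoplus_{\lambda} \HH_q(P_\lambda(\Z); \widetilde{\Q}_\lambda) \Longrightarrow \HH_{p+q-(n-2)}(\SL_n(\Z); \St_{\SL_n}(\Q;\Q)),
\]
where the sum runs over flag types $\lambda$ of length $p$ and $\widetilde{\Q}_\lambda$ is the orientation module.

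The first step is to control the $E^1$-page. Each $P_\lambda(\Z)$ sits in an extension of an integral Levi $\prod_i \SL_{n_i}(\Z)$ by its unipotent radical $U_\lambda(\Z)$, and one can attempt to compute $\HH_*(P_\lambda(\Z);\Q)$ in a stable range via Hochschild--Serre, using that $\HH_*(U_\lambda(\Z);\Q)$ is an exterior algebra by Nomizu combined with Borel's computation of the stable rational cohomology of $\SL_m(\Z)$. This should confine the $E^1$-page to a controlled region, reducing the problem to a combinatorial computation involving Borel classes and modular symbols.

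The main obstacle, and the reason the conjecture remains open for $i \geq 2$, is the analysis of the differentials. The $d^1$ differential is assembled from the boundary maps in the Tits building, so understanding its image and kernel across flag types is essentially the problem of describing the ``modular symbols'' presentation of $\St_{\SL_n}(\Q;\Q)$ as an $\SL_n(\Z)$-module. For $i=0$ and $i=1$ this was carried out by Lee--Szczarba and Church--Putman using delicate combinatorial arguments with Ash--Rudolph-style symbols; for general $i$ these combinatorics do not obviously extend, and one must additionally control the higher differentials $d^r$ for $r \geq 2$. An alternative route, closer in spirit to Theorem~\ref{theorem:vanishing}, is to build a highly connected $\SL_n(\Z)$-CW complex of partial integral frames (tuples of primitive vectors extending to a $\Z$-basis of $\Z^n$), whose simplex stabilizers take the form $\SL_{n-k}(\Z) \ltimes \Mat_{(n-k) \times k}(\Z)$, and argue inductively using its equivariant spectral sequence together with Shapiro's lemma; the crux will be to transfer the field-theoretic argument across the gap between $\SL_n(\Q)$ and $\SL_n(\Z)$ while preserving the sharper stable range $n \geq i+2$.
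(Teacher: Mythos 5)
This statement is Conjecture~\ref{conjecture:vanishing2}, which the paper does \emph{not} prove; it is explicitly stated to be open for $i \geq 2$ (with the cases $i=0$ and $i=1$ due to Lee--Szczarba and Church--Putman respectively). There is therefore no ``paper's own proof'' to compare against, and indeed your proposal is not a proof either: it honestly records where the known strategies run into trouble rather than closing the gap.

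That said, a few remarks on the content. Your outline of the equivariant spectral sequence for the Borel construction on the Tits building is the standard reduction, and your diagnosis that the problem is the identification of images and kernels of the $d^1$ differentials (equivalently, a workable presentation of modular symbols as an $\SL_n(\Z)$-module) is exactly where the $i=0$ and $i=1$ proofs invest all their effort. Your alternate route via a semisimplicial complex of partial integral frames is precisely the structure the present paper uses over a field. But be aware of two obstructions to transporting it to $\Z$ with the \emph{sharp} range $n \geq i+2$. First, even over a field the inductive scheme of \S\ref{section:theproof} inherently loses a factor of $2$ (each unit of $i$ costs two units of $n$), so even a perfect analogue would only give $n \geq 2i+2$, which matches Theorem~\ref{theorem:vanishing} but falls short of Conjecture~\ref{conjecture:vanishing2}. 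Second, over $\Z$ the complexes of partial bases are only known to have connectivity roughly $\frac{n-2}{2}$ (Maazen, van der Kallen), not $(n-2)$ as over a field, so the spectral sequence only computes the relevant equivariant homology in half the expected range. Both issues would have to be overcome for this approach to yield the conjecture, and the paper's authors are explicit that they do not know how; ``the crux will be to transfer the field-theoretic argument\ldots while preserving the sharper stable range'' is, as you say, exactly the open problem.

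Finally, one small imprecision: you describe the augmented chain complex of $\Tits_{\SL_n}(\Q)$ as a resolution of $\St_{\SL_n}(\Q;\Q)$. What one actually has, because the building is $(n-3)$-connected and $(n-2)$-dimensional, is an exact sequence
\[
0 \to \St_{\SL_n}(\Q;\Q) \to C_{n-2} \to \cdots \to C_0 \to \Q \to 0,
\]
so the chain complex resolves the trivial module and places $\St$ as the top syzygy; one must then dimension-shift (or equivalently use the spectral sequence as you do) to translate this into information about $\HH_*(\SL_n(\Z);\St)$. This is a common abuse of language but worth being careful about, since the chain modules $C_p$ are not projective as $\SL_n(\Z)$-modules, only induced from parabolic stabilizers.
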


\begin{remark}
The proofs by Lee--Szczarba \cite{LeeSzczarba} and Church--Putman \cite{ChurchPutmanCodimensionOne}
of special cases of Conjecture \ref{conjecture:vanishing1} both start by translating things into the language
of Conjecture \ref{conjecture:vanishing2}.
\end{remark}

We now briefly describe our proof of Theorem \ref{theorem:vanishing}.
As we will discuss in \S \ref{section:stability} below, there is a natural inclusion
$\St_{\GG_{n-1}}(\Field;R) \rightarrow \St_{\GG_n}(\Field;R)$.  
This induces a stabilization map
\begin{equation}
\label{eqn:stabilizationintro}
\HH_i(\GG_{n-1}(\Field);\St_{\GG_{n-1}}(\Field;R)) \rightarrow 
\HH_i(\GG_{n}(\Field);\St_{\GG_{n}}(\Field;R)).
\end{equation}
We will show in \S \ref{section:stability} that to prove that 
$\HH_i(\GG_{n}(\Field);\St_{\GG_{n}}(\Field;R)) = 0$ for
large $n$, it is enough to prove the seemingly weaker assertion that \eqref{eqn:stabilizationintro} is
a surjection for large $n$.
This idea was first introduced by Church--Farb--Putman \cite{ChurchFarbPutmanConjecture}
as a strategy for proving Conjecture \ref{conjecture:vanishing2}.  It was also noticed by Ash in unpublished work. 

The surjectivity of \eqref{eqn:stabilizationintro} is a weak form of {\em homological stability}.
There is an enormous literature on homological stability theorems.  The basic technique underlying most results 
in the subject goes back to unpublished work of Quillen.  In \cite{DwyerStability}, Dwyer used these ideas to prove a twisted
homological stability theorem for $\GL_n(\Field)$ with quite general coefficient systems.  This work 
was later generalized by van der Kallen \cite{VDK} and very recently by Randal-Williams--Wahl
\cite{RandalWilliamsWahl}, whose results cover all the classical groups in
Theorem \ref{theorem:vanishing}.  Unfortunately, the Steinberg representation does {\em not} satisfy the conditions
in any of these known theorems.  Indeed, these theorems are general enough that if it did, then this
would quickly lead to a proof of Conjecture \ref{conjecture:vanishing2}.  Nevertheless, we are able to use
some delicate properties of the Steinberg representation to jury-rig the Quillen machine such that it works
to prove that \eqref{eqn:stabilizationintro} is surjective for large $n$.

\begin{remark}
Homological stability for a sequence of groups and homomorphisms $X_1 \to X_2 \to \cdots$ states that the induced maps $\HH_i(X_n) \to \HH_i(X_{n+1})$ are isomorphisms for $n \gg 0$. Alternatively, we can think of each map as ``multiplication by $t$'' and give $\bigoplus_n \HH_i(X_n)$ the structure of a $R[t]$-module, where $R$ denotes our coefficient ring. At least when $R$ is a field, this isomorphism would be a consequence of finite generation. 

In our setting, with homology twisted by the Steinberg representation, one should instead think of this map as ``multiplication by $t$'' where $t$ is a generator for the exterior algebra in one variable $R[t]/t^2$, so that the groups $\HH_i$ being $0$ for $n \gg 0$ would again be a consequence of finite generation. At least when $\Field$ is a finite field of size $q$ and $R$ is the field of complex numbers, this is consistent with the idea that $\GL_n({\bf F}_q)$ is a $q$-analogue of the symmetric group and the Steinberg representation is the $q$-analogue of its sign representation, which is made more precise via their connection to symmetric functions, see \cite[\S\S I.7, IV.4]{macdonald}.
\end{remark}

\paragraph{Outline.}
We begin in \S \ref{section:backgroundnotation} with some background and notation.
Next, in \S \ref{section:stability} we
reduce Theorem \ref{theorem:vanishing} to an appropriate homological stability theorem. 
We then prove a key isomorphism in \S \ref{section:stabilizers}.  We
prove Theorem \ref{theorem:vanishing} in \S \ref{section:vanishing}.  This proof depends on
a calculation which we perform in \S \ref{section:differential}.

\paragraph{Convention regarding the empty set.}
If $X$ is the empty set and $R$ is a commutative ring, then we define $\RH_{-1}(X;R) = R$.
With this convention, if the semisimple $\Field$-rank of $\GG$ is $0$, then
$\St_{\GG}(\Field;R) = R$ with the trivial $\GG(\Field)$-action. 

\paragraph{Acknowledgments.}
The second author would like to thank Thomas Church and Benson Farb for many inspiring conversations concerning
Conjectures \ref{conjecture:vanishing1} and \ref{conjecture:vanishing2}.

\section{Background and notation}
\label{section:backgroundnotation}

This section contains some background information and notation needed in the remainder of the paper.  It
consists of two subsections: \S \ref{section:varioussubgroups} introduces some distinguished parabolic
subgroups, and \S \ref{section:steinbergfacts} gives some background about the Steinberg representations.

Throughout this section, $\Field$ is a field and $\GG_n$ is 
either $\GL_{n}$, $\SL_{n}$, $\Sp_{2n}$, $\SO_{n,n}$, or $\SO_{n,n+1}$.

\subsection{Parabolic and stabilizer subgroups}
\label{section:varioussubgroups}

Our proof of Theorem~\ref{theorem:vanishing} depends on a careful study of various subgroups of $\GG_n(\Field)$.  In this section,
we will introduce notation for these subgroups: a certain parabolic subgroup $\Para_n^{\ell}(\Field)$, its unipotent radical $\Uni_n^{\ell}(\Field)$, a Levi component $\Levi_n^{\ell}(\Field)$ of $\Para_n^{\ell}(\Field)$, and another subgroup $\Stab_n^{\ell}(\Field)$ that lies in $\Para_n^{\ell}(\Field)$ and fixes certain vectors.

\paragraph{General and special linear groups.}
Assume first that $\GG_n$ is either $\GL_{n}$ or $\SL_{n}$.  The group $\GG_n(\Field)$ thus acts on
the vector space $\Field^{n}$, and the $\Field$-parabolic subgroups of $\GG_n(\Field)$ are the stabilizers of flags
of subspaces of $\Field^{n}$.  Let $(\vec{a}_1,\ldots,\vec{a}_{n})$ be the standard basis for $\Field^{n}$.
For $1 \leq \ell \leq n$, the group $\Para_n^{\ell}(\Field)$ is defined to be the $\GG_n(\Field)$-stabilizer of the flag
\[
0 \subsetneq \Span{\vec{a}_1,\ldots,\vec{a}_{\ell}}.
\]
The group $\Uni_n^{\ell}(\Field)$ is the subgroup of $\Para_n^{\ell}(\Field)$ consisting of all $M \in \Para_n^{\ell}(\Field)$ that act as the identity on both
\[\Span{\vec{a}_1,\ldots,\vec{a}_{\ell}} \quad \quad \text{and} \quad \quad
\Field^n / \Span{\vec{a}_1,\ldots,\vec{a}_{\ell}}.\]
The group $\Levi_n^{\ell}(\Field)$ is defined to be the $\Para_n^{\ell}(\Field)$-stabilizer of the flag
\[
0 \subsetneq \Span{\vec{a}_{\ell+1},\ldots,\vec{a}_n}.
\]
If $\GG_n = \GL_{n}$ then $\Levi_n^\ell(\Field)$ is the subgroup $\GL_{\ell}(\Field) \times \GL_{n-\ell}(\Field)$
of $\GG_n$, while if $\GG_n = \SL_{n}$ then $\Levi_n^{\ell}(\Field)$ is the subgroup of $\GL_{\ell}(\Field) \times \GL_{n-\ell}(\Field)$
consisting of matrices of determinant $1$.  Finally, define 
\[\Stab_n^{\ell}(\Field) = \Set{$M \in \GG_n(\Field)$}{$M(\vec{a}_j) = \vec{a}_j$ for $1 \leq j \leq \ell$}.\]
We thus have $\Stab_n^{\ell}(\Field) \subset \Para_n^{\ell}(\Field)$.

\paragraph{Symplectic groups.}
Now assume that $\GG_n = \Sp_{2n}$.  Letting $\omega(\cdot,\cdot)$ be the standard symplectic form on
$\Field^{2n}$, the group $\GG_n(\Field)$ is the subgroup of $\GL_n(\Field)$ consisting of elements
that preserve $\omega(\cdot,\cdot)$.  The $\Field$-parabolic subgroups of $\GG_n(\Field)$ are the
$\GG_n(\Field)$-stabilizers of flags of isotropic subspaces of $\Field^{2n}$, that is, subspaces on which
$\omega(\cdot,\cdot)$ vanishes identically.  Let $(\vec{a}_1,\ldots,\vec{a}_n,\vec{b}_1,\ldots,\vec{b}_n)$
be the standard symplectic basis for $\Field^{2n}$, so
\[
\omega(\vec{a}_j,\vec{a}_{j'}) = \omega(\vec{b}_j,\vec{b}_{j'}) = 0 \quad \text{and} \quad
\omega(\vec{a}_j,\vec{b}_{j'}) = \delta_{jj'}
\]
for $1 \leq j,j' \leq n$, where $\delta_{jj'}$ is the Kronecker delta function.  For $1 \leq \ell \leq n$, the
group $\Para_n^{\ell}(\Field)$ is defined to be the $\GG_n(\Field)$-stabilizer of the isotropic flag
\[
0 \subsetneq \Span{\vec{a}_1,\ldots,\vec{a}_{\ell}}.
\]
The group $\Uni_n^{\ell}(\Field)$ is the subgroup of $\Para_n^{\ell}(\Field)$ consisting of all $M \in \Para_n^{\ell}(\Field)$ that act as the identity on both
\begin{align*}
\Span{\vec{a}_1,\ldots,\vec{a}_{\ell}} \quad \quad \text{and} \quad \quad
&\Span{\vec{a}_1,\ldots,\vec{a}_{\ell}}^{\perp} / \Span{\vec{a}_1,\ldots,\vec{a}_{\ell}}\\
&\ \ \ \ \ = \Span{\vec{a}_1,\ldots,\vec{a}_{\ell},\vec{a}_{\ell+1},\vec{b}_{\ell+1},\ldots,\vec{a}_n,\vec{b}_n} / \Span{\vec{a}_1,\ldots,\vec{a}_{\ell}}.
\end{align*}
The group $\Levi_n^{\ell}(\Field)$ is defined to be the $\Para_n^{\ell}(\Field)$-stabilizer of the isotropic flag
\[
0 \subsetneq \Span{\vec{b}_1,\ldots,\vec{b}_{\ell}}.
\]
The group $\Levi_n^{\ell}(\Field)$ is thus isomorphic to $\GL_{\ell}(\Field) \times \GG_{n-\ell}(\Field)$.  Finally,
define
\[
\Stab_n^{\ell}(\Field) = \Set{$M \in \GG_n(\Field)$}{$M(\vec{a}_j) = \vec{a}_j$ for $1 \leq j \leq \ell$}.
\]
We thus have $\Stab_n^{\ell}(\Field) \subset \Para_n^{\ell}(\Field)$.

\paragraph{Orthogonal groups.}
Finally, assume that $\GG_n$ is either $\SO_{n,n}$ or $\SO_{n,n+1}$.  For an appropriate $m$, the
group $\GG_n(\Field)$ is then the subgroup of $\SL_m(\Field)$ consisting of elements
that preserve a quadratic form $q(\cdot)$ on $\Field^m$:
\begin{compactitem}
\item If $\GG_n = \SO_{n,n}$, then let $m = 2n$ and let $(\vec{a}_1,\ldots,\vec{a}_n,\vec{b}_1,\ldots,\vec{b}_n)$
be the standard basis for $\Field^m$.  The group $\GG_n(\Field)$ is the $\SL_m(\Field)$-stabilizer of the quadratic form
$q(\cdot)$ on $\Field^m$ defined via the formula
\[
q\left(\sum_{j=1}^n \left(c_j \vec{a}_j + d_j \vec{b}_j\right)\right) = \sum_{j=1}^n c_j d_j \quad \quad (c_j,d_j \in \Field).
\]
\item If $\GG_n = \SO_{n,n+1}$, then let $m=2n+1$ and let $(\vec{a}_1,\ldots,\vec{a}_n,\vec{b}_1,\ldots,\vec{b}_{n},\vec{e})$ 
be the standard basis for $\Field^m$.  The group $\GG_n(\Field)$ is the $\SL_m(\Field)$-stabilizer of the quadratic form
$q(\cdot)$ on $\Field^m$ defined via the formula
\[
q\left(\lambda \vec{e} + \sum_{j=1}^n \left(c_j \vec{a}_j + d_j \vec{b}_j\right)\right) = \lambda^2 + \sum_{j=1}^n c_j d_j \quad \quad (c_j, d_j, \lambda \in \Field).
\]
\end{compactitem}
In both cases, the $\Field$-parabolic subgroups of $\GG_n(\Field)$
are the $\GG_n(\Field)$-stabilizers of flags of isotropic subspaces of $\Field^m$, that is, subspaces on which
$q(\cdot)$ vanishes identically.  For $1 \leq \ell \leq n$ the group $\Para_n^{\ell}(\Field)$ is defined
to be the $\GG_n(\Field)$-stabilizer of the isotropic flag
\[
0 \subsetneq \Span{\vec{a}_1,\ldots,\vec{a}_{\ell}}.
\]
For $\GG_n = \SO_{n,n}$, 
the group $\Uni_n^{\ell}(\Field)$ is the subgroup of $\Para_n^{\ell}(\Field)$ consisting of all $M \in \Para_n^{\ell}(\Field)$ that act as the identity on both
\begin{align*}
\Span{\vec{a}_1,\ldots,\vec{a}_{\ell}} \quad \quad \text{and} \quad \quad
&\Span{\vec{a}_1,\ldots,\vec{a}_{\ell}}^{\perp} / \Span{\vec{a}_1,\ldots,\vec{a}_{\ell}}\\
&\ \ \ \ \ = \Span{\vec{a}_1,\ldots,\vec{a}_{\ell},\vec{a}_{\ell+1},\vec{b}_{\ell+1},\ldots,\vec{a}_n,\vec{b}_n} / \Span{\vec{a}_1,\ldots,\vec{a}_{\ell}},
\end{align*}
while if $\GG_n = \SO_{n,n+1}$, then the group $\Uni_n^{\ell}(\Field)$ is the subgroup of $\Para_n^{\ell}(\Field)$ consisting
of all $M \in \Para_n^{\ell}(\Field)$ that act as the identity on both
\begin{align*}
\Span{\vec{a}_1,\ldots,\vec{a}_{\ell}} \quad \quad \text{and} \quad \quad
&\Span{\vec{a}_1,\ldots,\vec{a}_{\ell}}^{\perp} / \Span{\vec{a}_1,\ldots,\vec{a}_{\ell}}\\
&\ \ \ \ \ = \Span{\vec{a}_1,\ldots,\vec{a}_{\ell},\vec{a}_{\ell+1},\vec{b}_{\ell+1},\ldots,\vec{a}_n,\vec{b}_n,\vec{e}} / \Span{\vec{a}_1,\ldots,\vec{a}_{\ell}}.
\end{align*}
The group $\Levi_n^{\ell}(\Field)$ is defined to be the $\Para_n^{\ell}(\Field)$-stabilizer of the isotropic flag
\[
0 \subsetneq \Span{\vec{b}_1,\ldots,\vec{b}_{\ell}}.
\]
The group $\Levi_n^{\ell}(\Field)$ is thus isomorphic to $\GL_{\ell}(\Field) \times \GG_{n-\ell}(\Field)$.  Finally,
define
\[
\Stab_n^{\ell}(\Field) = \Set{$M \in \GG_n(\Field)$}{$M(\vec{a}_j) = \vec{a}_j$ for $1 \leq j \leq \ell$}.
\]
We thus have $\Stab_n^{\ell}(\Field) \subset \Para_n^{\ell}(\Field)$.

\subsection{Facts about the Steinberg representation}
\label{section:steinbergfacts}

Let $R$ be a commutative ring. 
The following theorem of Reeder \cite{ReederSteinberg} will play an important role in our proof of
Theorem \ref{theorem:vanishing}.

\begin{theorem}[{\cite[Proposition 1.1]{ReederSteinberg}}]
\label{theorem:reeder}
Let $\GG$ be a connected reductive group defined over a field $\Field$, let $\Para(\Field)$ be a
$\Field$-parabolic subgroup of $\GG(\Field)$, and let $\Levi(\Field)$ be a Levi component of
$\Para(\Field)$.  Then there exists an $\Levi(\Field)$-equivariant map
\[
\St_{\Levi}(\Field;R) \longrightarrow \Res^{\GG(\Field)}_{\Levi(\Field)} \St_{\GG}(\Field;R)
\]
such that the induced map
\[
\Ind_{\Levi(\Field)}^{\Para(\Field)} \St_{\Levi}(\Field;R) \rightarrow \Res^{\GG(\Field)}_{\Para(\Field)} \St_{\GG}(\Field;R)
\]
is an isomorphism.
\end{theorem}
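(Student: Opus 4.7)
The plan is to reconstruct Reeder's original proof from the cited paper. The key combinatorial input is the natural bijection between parabolic subgroups of $\Levi(\Field)$ and parabolic subgroups of $\GG(\Field)$ strictly contained in $\Para(\Field)$: explicitly, $Q_L \leftrightarrow Q_L \cdot \Uni(\Field)$, where $\Uni$ is the unipotent radical of $\Para$. This bijection gives a simplicial embedding of $\Tits_\Levi(\Field)$ into $\Tits_\GG(\Field)$ whose image is the subcomplex of parabolics strictly below $\Para$.

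First, I would construct the $\Levi(\Field)$-equivariant map $\St_\Levi \to \Res^\GG_\Levi \St_\GG$ using explicit cycle representatives. The link of the vertex $\Para$ in $\Tits_\GG(\Field)$ decomposes as the join $\Tits_\Levi(\Field) * \Tits^{>\Para}_\GG(\Field)$, where $\Tits^{>\Para}_\GG(\Field)$ is the subcomplex of proper parabolics strictly containing $\Para$ (empty when $\Para$ is maximal). Fix once and for all a top-dimensional cycle $\Xi$ generating the reduced homology of $\Tits^{>\Para}_\GG(\Field)$ (using the convention $\RH_{-1}(\emptyset;R) = R$ when $\Para$ is maximal). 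For each cycle $z$ representing a class in $\St_\Levi$, the chain $z * \Para * \Xi$ is a top-dimensional cycle in $\Tits_\GG(\Field)$, since the only nonzero contributions to its boundary come from $\partial z$, $\partial \Xi$, and $\partial \Para$, all of which vanish. This construction is $\Levi(\Field)$-equivariant because $\Levi \subset \Para \subset Q$ for every parabolic $Q$ containing $\Para$, so (by self-normalization of parabolics) $\Levi$ fixes each vertex of $\Tits^{>\Para}_\GG(\Field)$, hence acts trivially on $\Xi$. Frobenius reciprocity then produces the $\Para(\Field)$-equivariant map $\Ind_\Levi^\Para \St_\Levi \to \Res^\GG_\Para \St_\GG$.

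Second, I would prove this induced map is an isomorphism by identifying $\Res^\GG_\Para \St_\GG$ as a free $R[\Uni(\Field)]$-module with basis given by the image of any $R$-basis of $\St_\Levi$. The tool is the Bruhat decomposition relative to $\Para$ and a chosen opposite parabolic $\bar\Para$: the cells $\Para w \bar\Para$, indexed by double coset representatives in the Weyl group, induce a $\Para$-equivariant filtration of $\St_\GG$. One argues that only the open cell $\Para \cdot \bar\Uni \cong \Para \times \bar\Uni$ contributes nontrivially to the image of the induced map, yielding exactly $\Ind_\Levi^\Para \St_\Levi$. The main obstacle is carrying out this filtration argument over an arbitrary commutative ring $R$: over $\C$ with $\Field$ finite one could conclude by the dimension count $\dim \St_\GG = |\Uni(\Field)| \cdot \dim \St_\Levi$, but for general $R$ one must directly exhibit a splitting by producing a coherent set of cycle representatives indexed by $\bar\Uni(\Field)$ whose linear independence can be verified at the chain level.
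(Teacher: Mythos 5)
The paper does not prove this theorem; it is cited directly from Reeder \cite[Proposition 1.1]{ReederSteinberg}, so you are being compared against Reeder's argument rather than against a proof given in this source.

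There is a genuine error in your construction of the map $\St_{\Levi}(\Field;R) \to \Res^{\GG(\Field)}_{\Levi(\Field)}\St_{\GG}(\Field;R)$. You claim the chain $z * \Para * \Xi$ is a cycle ``since the only nonzero contributions to its boundary come from $\partial z$, $\partial \Xi$, and $\partial \Para$, all of which vanish.'' But in the simplicial chain complex of $\Tits_{\GG}(\Field)$, removing the vertex $\Para$ from a flag $\sigma_{\Levi} \subsetneq \Para \subsetneq \sigma_{>\Para}$ produces the legitimate simplex $\sigma_{\Levi} \subsetneq \sigma_{>\Para}$; this is precisely the reduced-chain statement that $\partial[\Para]$ is the augmentation, not zero. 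One gets
\[
\partial(z * \Para * \Xi) \;=\; (\partial z) * \Para * \Xi \;\pm\; z * \Xi \;\pm\; z * \Para * (\partial \Xi) \;=\; \pm\, z * \Xi,
\]
which is a nonzero $(r-2)$-chain. A concrete check: take $\GG = \GL_3$ and $\Para = \Para_3^2$ maximal, so $\Xi$ is the empty simplex and $z*\Para*\Xi = z*V$ for $V = \Span{\vec{a}_1,\vec{a}_2}$, where $z = \sum_L c_L[L]$ with $\sum_L c_L = 0$; then $\partial(z * V) = \sum_L c_L(V - L) = -z \neq 0$. Reeder's actual construction must cancel this leftover term, and it does so by bringing in an opposite parabolic $\bar{\Para}$ (or equivalently an apartment through both $\Para$ and $\bar{\Para}$): roughly, the correct cycle involves terms supported on the star of $\bar{\Para}$ as well, so that the two unwanted boundary contributions cancel. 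Your construction as written produces only a chain, not an element of $\St_{\GG}(\Field;R)$, so the $\Levi$-equivariance argument that follows does not get off the ground.

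The second half is also incomplete, and you say as much: ``One argues that only the open cell contributes\ldots'' is a gesture at an argument rather than an argument, and your closing sentence concedes that you have not produced the coherent family of cycle representatives needed to verify freeness over $R[\Uni(\Field)]$ for general $R$. The direction is sensible --- one does want a $\Para(\Field)$-stable filtration of $\Tits_{\GG}(\Field)$ with successive quotients contributing free $R[\Uni(\Field)]$-summands, and this is in the spirit of the Solomon--Tits basis recorded in Theorem~\ref{theorem:uppertriangular} --- but as submitted it is a plan, not a proof. Both halves need to be repaired and completed.
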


\begin{remark}
The map in Theorem \ref{theorem:reeder} is not unique; for instance, it can be
post-composed with any element of the unipotent radical of $\Para(\Field)$.  The
paper \cite{ReederSteinberg} contains a specific construction of this map, and whenever
we refer to the map in Theorem \ref{theorem:reeder} we mean the one constructed
in \cite{ReederSteinberg}.  
\end{remark}

We wish to apply this to the distinguished parabolic subgroups $\Para_n^{\ell}(\Field)$ that
we introduced in \S \ref{section:varioussubgroups}.  To do this, we need to identify
$\St_{\Levi_n^{\ell}}(\Field;R)$.

\begin{lemma}
\label{lemma:identifystlevi}
Let $\GG_n$ be either $\GL_{n}$, $\SL_{n}$, $\Sp_{2n}$, $\SO_{n,n}$, or $\SO_{n,n+1}$.
Then for all fields $\Field$ and all commutative rings $R$, we have
\[
\St_{\Levi_n^{\ell}}(\Field;R) = \St_{\GL_{\ell}}(\Field;R) \otimes \St_{\GG_{n-\ell}}(\Field;R)
\]
for $1 \leq \ell \leq n$.
\end{lemma}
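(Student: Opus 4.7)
The plan is to identify the Tits building of $\Levi_n^{\ell}(\Field)$ with the simplicial join $\Tits_{\GL_\ell}(\Field) \ast \Tits_{\GG_{n-\ell}}(\Field)$ and then read off its top reduced homology via the Künneth formula for joins. The starting observation, which falls straight out of Section~\ref{section:varioussubgroups}, is that the Levi decomposes as a product: for $\GG_n \in \{\GL_n,\Sp_{2n},\SO_{n,n},\SO_{n,n+1}\}$ one has $\Levi_n^{\ell}(\Field) = \GL_\ell(\Field) \times \GG_{n-\ell}(\Field)$ on the nose, while for $\GG_n = \SL_n$ the Levi is the codimension-one subgroup $\{(A,B) \in \GL_\ell(\Field) \times \GL_{n-\ell}(\Field) : \det(A)\det(B)=1\}$. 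In every case the derived subgroup coincides with that of the full product $\GL_\ell(\Field) \times \GG_{n-\ell}(\Field)$; since the Tits building and its top reduced homology depend only on this derived subgroup (central tori act trivially on the building and sit inside every parabolic), the computation for $\Levi_n^{\ell}(\Field)$ reduces to the computation for the honest product.

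Next I would verify that the proper $\Field$-parabolic subgroups of $\GL_\ell(\Field) \times \GG_{n-\ell}(\Field)$ are exactly the products $\Para_1 \times \Para_2$ where $\Para_i$ is either a parabolic of the corresponding factor or the full factor, with at least one proper. Concretely, $\Levi_n^{\ell}(\Field)$ preserves a direct sum decomposition $V = V_1 \oplus V_2$ of its defining module (with $V_1 = \Span{\vec{a}_1,\ldots,\vec{a}_\ell}$ in the $\GL/\SL$ case and $V_1 = \Span{\vec{a}_1,\ldots,\vec{a}_\ell,\vec{b}_1,\ldots,\vec{b}_\ell}$ in the $\Sp/\SO$ cases), and a Schur-style argument shows that any (isotropic) subspace stabilized by the Levi splits as the direct sum of its intersections with $V_1$ and $V_2$. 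Consequently any (isotropic) flag stabilized by $\Levi_n^{\ell}(\Field)$ assembles from two (isotropic) flags, one in each $V_i$, so the poset of proper parabolics of the product is the ``box minus the top corner'' of the two parabolic posets. A standard poset-topology argument then identifies the associated order complex with the simplicial join $\Tits_{\GL_\ell}(\Field) \ast \Tits_{\GG_{n-\ell}}(\Field)$, equivariantly for the Levi action.

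Finally I would invoke the Künneth formula for joins. Because both factors are wedges of spheres of top dimension by the Solomon--Tits theorem, their top reduced homology is $R$-free; there are therefore no $\Tor$ terms, and the isomorphism
\[
\RH_{d_1 + d_2 + 1}(X \ast Y; R) \cong \RH_{d_1}(X;R) \otimes_R \RH_{d_2}(Y;R),
\]
applied with $X = \Tits_{\GL_\ell}(\Field)$ and $Y = \Tits_{\GG_{n-\ell}}(\Field)$ of dimensions $d_1, d_2$, yields the Levi-equivariant identification $\St_{\Levi_n^{\ell}}(\Field;R) \cong \St_{\GL_\ell}(\Field;R) \otimes \St_{\GG_{n-\ell}}(\Field;R)$. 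The principal obstacle is the poset-topology step identifying the Tits building of the product with the join of the factor buildings; the $\SL_n$ case further requires the small additional check that restricting from $\GL_\ell(\Field) \times \GL_{n-\ell}(\Field)$ to its determinant-one subgroup does not alter the set of parabolic subgroups or their containments, which is immediate from the fact that every parabolic of $\GL_m$ contains the center.
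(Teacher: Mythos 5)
Your proposal is correct and takes essentially the same approach as the paper, just with the details spelled out. The paper compresses the entire product case into the single sentence that the result "follows from the decomposition $\Levi_n^{\ell}(\Field) = \GL_\ell(\Field) \times \GG_{n-\ell}(\Field)$," leaving implicit exactly what you make explicit: that the Tits building of a product is the join of the factor buildings, and that the Solomon--Tits theorem (top homology free) plus the Künneth formula for joins gives the tensor product formula without Tor correction terms. For the $\SL_n$ case, the paper phrases the reduction via an intersection-preserving bijection between parabolics of $\Levi_n^\ell(\Field)$ and parabolics of $\GL_\ell(\Field) \times \GL_{n-\ell}(\Field)$, while you phrase it via the observation that the determinant-one condition only cuts out a central torus (equivalently, both groups share the same derived subgroup), which preserves parabolics and hence the Tits building; these are two descriptions of the same fact, both flowing from central tori sitting inside every parabolic.
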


\begin{proof}
For $\GG_n \neq \SL_n$, this follows from the decomposition 
$\Levi_n^{\ell}(\Field) = \GL_{\ell}(\Field) \times \GG_{n-\ell}(\Field)$.  For $\GG_n = \SL_n$, we
instead have that $\Levi_n^{\ell}(\Field)$ is the subgroup of 
$\GL_{\ell}(\Field) \times \GL_{n-\ell}(\Field)$ consisting of matrices of determinant $1$.  The
lemma in this case follows from two facts:
\begin{compactitem}
\item There is a bijection between $\Field$-parabolic subgroups
of $\GL_{n-\ell}(\Field)$ and $\SL_{n-\ell}(\Field)$, and thus an $\SL_{n-\ell}(\Field)$-equivariant isomorphism between $\St_{\GL_{n-\ell}}(\Field; R)$ and $\St_{\SL_{n-\ell}}(\Field; R)$.  
\item There is a bijection between $\Field$-parabolic subgroups of 
$\GL_{\ell}(\Field) \times \GL_{n-\ell}(\Field)$ and $\Levi_n^{\ell}(\Field)$, and thus an
$\Levi_n^{\ell}(\Field)$-equivariant isomorphism between
$\St_{\GL_{\ell}}(\Field;R) \otimes \St_{\GL_{n-\ell}}(\Field;R)$ and $\St_{\Levi_n^{\ell}}(\Field;R)$.
\end{compactitem}
Both of these bijections come from taking intersections.
\end{proof}

These two results allow us to make the following definition.

\begin{definition}
Let $\GG_n$ be either $\GL_{n}$, $\SL_{n}$, $\Sp_{2n}$, $\SO_{n,n}$, or $\SO_{n,n+1}$.
Also, let $\Field$ be a field and $R$ be a commutative ring.  For $1 \leq \ell \leq n$,
the {\em Reeder product map} is the map 
\[\St_{\GL_{\ell}}(\Field;R) \otimes \St_{\GG_{n-\ell}}(\Field;R) \longrightarrow \St_{\GG_n}(\Field;R)\]
obtained by combining Lemma \ref{lemma:identifystlevi} and Theorem \ref{theorem:reeder}.
\end{definition}

\begin{remark}
\label{remark:reederdecompose}
Identifying $\St_{\GL_{\ell}}(\Field;R) \otimes \St_{\GG_{n-\ell}}(\Field;R)$ with its
image in $\St_{\GG_n}(\Field;R)$ under the Reeder product map, one way of viewing
Theorem \ref{theorem:reeder} is that it asserts that
\[\St_{\GG_n}(\Field;R) = \bigoplus_{u \in \Uni_n^{\ell}(\Field)} u \cdot (\St_{\GL_{\ell}}(\Field;R) \otimes \St_{\GG_{n-\ell}}(\Field;R)). \qedhere\]
\end{remark}

We will also need the following lemma, which is precisely the case $i=0$ of
Theorem \ref{theorem:vanishing}.  It generalizes \cite[Theorem 4.1]{LeeSzczarba}.  Recall
that if $G$ is a group and $M$ is a $G$-module, then the coinvariants $M_G$ are the largest quotient of $M$ on
which $G$ acts trivially.  The coinvariants $M_G$ are isomorphic to $\HH_0(G;M)$.

\begin{lemma}
\label{lemma:killcoinvariants}
Let $\GG_n$ be either $\GL_{n}$, $\SL_{n}$, $\Sp_{2n}$, $\SO_{n,n}$, or $\SO_{n,n+1}$.
Then for all fields $\Field$ and all commutative rings $R$, we have
$(\St_{\GG_n}(\Field;R))_{\GG_n(\Field;R)} = 0$ for $n \geq 2$.
\end{lemma}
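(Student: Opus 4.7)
The plan is to reduce every case to a concrete computation in rank one by using the Reeder product decomposition of Remark~\ref{remark:reederdecompose}. The key observation is this: if $W \subset \St_{\GG_n}(\Field;R)$ is the image of the Reeder product map, then since $\Uni_n^\ell(\Field) \subset \GG_n(\Field)$ acts trivially on $\GG_n(\Field)$-coinvariants, Remark~\ref{remark:reederdecompose} implies that $W$ surjects onto $(\St_{\GG_n}(\Field;R))_{\GG_n(\Field)}$. Since the Reeder product map is $\Levi_n^\ell(\Field)$-equivariant and $\Levi_n^\ell(\Field) \subset \GG_n(\Field)$, this surjection factors through the $\Levi_n^\ell(\Field)$-coinvariants of $\St_{\GL_\ell}(\Field;R) \otimes \St_{\GG_{n-\ell}}(\Field;R)$.

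I would first take $\ell = n$ to handle $\GG_n \in \{\Sp_{2n}, \SO_{n,n}, \SO_{n,n+1}\}$ with $n \geq 2$. Here $\Span{\vec{a}_1,\ldots,\vec{a}_n}$ is a proper isotropic subspace, so $\Para_n^n$ is proper and $\Levi_n^n \cong \GL_n$ (using that $\GG_0$ is trivial and $\St_{\GG_0}(\Field;R) = R$ by convention). The reduction above then shows that $(\St_{\GG_n})_{\GG_n}$ is a quotient of $(\St_{\GL_n})_{\GL_n}$, reducing these cases to the linear ones. For $\GL_n$ and $\SL_n$ with $n \geq 3$, I would take $\ell = 1$: using $\St_{\GL_1}(\Field;R) = R$ and, for $\SL_n$, the identification $\St_{\Levi_n^1} \cong \St_{\GL_{n-1}}$ coming from the proof of Lemma~\ref{lemma:identifystlevi}, the same reduction shows that $(\St_{\GL_n})_{\GL_n}$ and $(\St_{\SL_n})_{\SL_n}$ are both quotients of $(\St_{\GL_{n-1}})_{\GL_{n-1}}$. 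Induction on $n$ then reduces everything to the base case $n = 2$ for $\GL_2$ and $\SL_2$.

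The base case is the only substantive computation. For $\GG_2 \in \{\GL_2, \SL_2\}$, the Tits building is the discrete set of lines in $\Field^2$, so $\St_{\GG_2}(\Field;R)$ is the kernel of the augmentation $R[\text{lines in } \Field^2] \to R$ and is spanned by differences $L - L'$ of distinct lines. Since $\Field^2$ has $|\Field|+1 \geq 3$ lines through the origin and $\GG_2(\Field)$ acts transitively on ordered pairs of distinct lines, picking three distinct lines $L_1, L_2, L_3$ and using the identity $L_1 - L_3 = (L_1 - L_2) + (L_2 - L_3)$ forces the common class $c$ of all $[L - L']$ in coinvariants to satisfy $c = 2c$, hence $c = 0$. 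The main potential wrinkle---verifying $\SL_2$-transitivity on ordered pairs of distinct lines---is handled by scaling a basis vector to adjust the determinant to one, so I do not expect any serious obstacle.
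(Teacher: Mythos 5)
Your proof is correct, and the underlying mechanism is the same as the paper's: use the Reeder decomposition to push the $\GG_n(\Field)$-coinvariants down to $\Levi_n^\ell(\Field)$-coinvariants of $\St_{\GL_\ell}(\Field;R)\otimes\St_{\GG_{n-\ell}}(\Field;R)$, then collapse to an explicit rank-two computation. You phrase the first step via the direct-sum decomposition of Remark~\ref{remark:reederdecompose} rather than via the induced-module isomorphism in Theorem~\ref{theorem:reeder}, but these amount to the same thing, since $(\Ind_{\Levi_n^\ell}^{\Para_n^\ell} M)_{\Para_n^\ell} \cong M_{\Levi_n^\ell}$.

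Where you genuinely diverge from the paper is the choice of $\ell$ and the resulting bookkeeping. You take $\ell = n$ for the symplectic and orthogonal types (using the Siegel parabolic, so $\Levi_n^n\cong\GL_n$ and everything reduces to $(\St_{\GL_n})_{\GL_n}$), and $\ell = 1$ for the linear types, running an induction on $n$ down to a base case $n=2$. The paper instead takes $\ell=2$ uniformly for all five families: since $\Levi_n^2(\Field)$ always contains $\SL_2(\Field)\times 1$, and coinvariants by a larger group are a quotient of coinvariants by a subgroup, everything reduces in a single step to $(\St_{\GL_2}(\Field;R))_{\SL_2(\Field)}=0$. Your route therefore costs an induction and a type-split that the paper avoids, but both converge on the same base computation: the triangle relation $L_1 - L_3 = (L_1 - L_2)+(L_2-L_3)$ among three distinct points of $\Proj^1(\Field)$, together with transitivity on ordered pairs, forces the coinvariant class to satisfy $c=2c$. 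The paper simply cites Lee--Szczarba \cite[Theorem 4.1]{LeeSzczarba} for this; you wrote it out, and the $\SL_2$-transitivity wrinkle you flagged is handled exactly as you say.

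One small presentational point: when you assert $W$ surjects onto $(\St_{\GG_n}(\Field;R))_{\GG_n(\Field)}$, the cleaner statement is that the full composite $\St_{\GL_\ell}(\Field;R)\otimes\St_{\GG_{n-\ell}}(\Field;R)\to(\St_{\GG_n}(\Field;R))_{\GG_n(\Field)}$ is already surjective and $\Levi_n^\ell(\Field)$-invariant, hence descends to a surjection from the Levi-coinvariants; this is what you use in the next sentence anyway.
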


\begin{proof}
Theorem~\ref{theorem:reeder} implies that
\[
\Ind_{\Levi_n^2(\Field)}^{\Para_n^2(\Field)} \St_{\GL_2}(\Field;R) \otimes \St_{\GG_{n-2}}(\Field;R) \cong \Res^{\GG_n(\Field)}_{\Para_n^2(\Field)} \St_{\GG_n}(\Field;R).
\]
It is thus enough to prove that
\[
(\St_{\GL_2}(\Field;R) \otimes \St_{\GG_{n-2}}(\Field;R))_{\Levi_n^2(\Field)} = 0.
\]
Whatever $\GG_n$ is, the group $\Levi_n^2(\Field)$ contains the subgroup $\SL_2(\Field) \times 1$.  It is thus
enough to prove that
\[
(\St_{\GL_2}(\Field;R))_{\SL_2(\Field)} = 0.
\]
This is an easy exercise using the fact that 
\[\St_{\GL_2}(\Field;R) = \RH_0(\Tits_2(\Field);R) = \RH_{0}(\Proj^1(\Field);\Q),\]
where $\Proj^1(\Field)$ is the projective line over $\Field$, regarded as a discrete set of points. For details, see \cite[Theorem 4.1]{LeeSzczarba}.
\end{proof}

\section{Reduction to stability}
\label{section:stability}

Let $\GG_n$ be either $\GL_{n}$, $\SL_{n}$, $\Sp_{2n}$, $\SO_{n,n}$, or $\SO_{n,n+1}$. 
Let $\Field$ be a field and $R$ be a commutative
ring.  In this section, we reduce Theorem \ref{theorem:vanishing} to an appropriate
homological stability theorem.

Fix some $i \geq 0$ and some $n \geq 2$.  The {\em stabilization map} for 
$\HH_i(\GG_{n-1}(\Field);\St_{\GG_{n-1}}(\Field;R))$ is the map
\begin{equation}
\label{eqn:stabmap}
\HH_i(\GG_{n-1}(\Field);\St_{\GG_{n-1}}(\Field;R)) \rightarrow \HH_i(\GG_n(\Field);\St_{\GG_n}(\Field;R))
\end{equation}
induced by the following two maps:
\begin{compactitem}
\item The group homomorphism $\GG_{n-1}(\Field) \rightarrow \GG_n(\Field)$ obtained
as follows.  The group $\Levi_n^1(\Field)$ is a subgroup of 
$\GL_1(\Field) \times \GG_{n-1}(\Field)$ that contains the subgroup
$1 \times \GG_{n-1}(\Field)$.  In fact, 
$\Levi_n^1(\Field) = \GL_1(\Field) \times \GG_{n-1}(\Field)$ except
when $\GG_n = \SL_n$.  We can thus define a homomorphism $\GG_{n-1}(\Field) \rightarrow \GG_n(\Field)$
via the composition
\[\GG_{n-1}(\Field) = 1 \times \GG_{n-1}(\Field) \hookrightarrow \Levi_n^1(\Field) \hookrightarrow \GG_n(\Field).\]
\item The map $\St_{\GG_{n-1}}(\Field;R) \rightarrow \St_{\GG_{n}}(\Field;R)$ that
equals the composition
\[\St_{\GG_{n-1}}(\Field;R) \cong R \otimes \St_{\GG_{n-1}}(\Field;R) \cong \St_{\GL_1}(\Field;R) \otimes \St_{\GG_{n-1}}(\Field;R) \rightarrow \St_{\GG_n}(\Field;R),\]
where the final arrow is the Reeder product map.  Here we are using the convention regarding
the empty set discussed at the end of the introduction which implies that
$\St_{\GL_1}(\Field;R) = R$; this convention is compatible with Theorem \ref{theorem:reeder}.
\end{compactitem}
The main result of this section is then as follows.

\begin{lemma}
\label{lemma:reducetostab}
Let $\GG_n$ be either $\GL_{n}$, $\SL_{n}$, $\Sp_{2n}$, $\SO_{n,n}$, or $\SO_{n,n+1}$.
Let $\Field$ be a field and let $R$ be a commutative ring.  Assume that the stabilization
map \eqref{eqn:stabmap}
is a surjection for $n \geq N$.  Then
$\HH_i(\GG_n(\Field);\St_{\GG_n}(\Field;R)) = 0$ for $n \geq N+1$.
\end{lemma}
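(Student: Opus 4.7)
My plan is to reduce the lemma to showing that the \emph{double} stabilization
\[
\HH_i(\GG_{n-1}(\Field);\St_{\GG_{n-1}}(\Field;R)) \to \HH_i(\GG_{n}(\Field);\St_{\GG_n}(\Field;R)) \to \HH_i(\GG_{n+1}(\Field);\St_{\GG_{n+1}}(\Field;R))
\]
is identically zero whenever it is defined. Once this is established, the lemma follows immediately: for $n \geq N+1$, both stabilizations $\HH_i(\GG_{n-2}) \to \HH_i(\GG_{n-1})$ and $\HH_i(\GG_{n-1}) \to \HH_i(\GG_n)$ are surjective by the hypothesis (applied at $n-1$ and $n$), so $\HH_i(\GG_n(\Field);\St_{\GG_n}(\Field;R))$ coincides with the image of the double stabilization out of $\HH_i(\GG_{n-2}(\Field);\St_{\GG_{n-2}}(\Field;R))$, and that image is zero.

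To prove that the double stabilization vanishes, I would factor it through the $\ell=2$ Reeder product. Let $\xi \in \St_{\GL_2}(\Field;R)$ denote the image of $1 \otimes 1 \in R = \St_{\GL_1}(\Field;R) \otimes \St_{\GL_1}(\Field;R)$ under the $\ell = 1$ Reeder product map for $\GL_2$. By the transitivity of the Reeder construction with respect to iterated parabolic inclusions, the composition of two $\ell = 1$ Reeder maps $\St_{\GG_{n-1}} \to \St_{\GG_n} \to \St_{\GG_{n+1}}$ agrees with the coefficient map $\eta \mapsto \xi \otimes \eta$ into $\St_{\GL_2}(\Field;R) \otimes_R \St_{\GG_{n-1}}(\Field;R)$ followed by the $\ell = 2$ Reeder product; on the group side, the composite inclusion $\GG_{n-1}(\Field) \hookrightarrow \GG_n(\Field) \hookrightarrow \GG_{n+1}(\Field)$ agrees with $\GG_{n-1}(\Field) \hookrightarrow \Levi_{n+1}^2(\Field) \hookrightarrow \GG_{n+1}(\Field)$. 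Consequently, the double stabilization factors through $\HH_i(\Levi_{n+1}^2(\Field);\St_{\GL_2}(\Field;R) \otimes_R \St_{\GG_{n-1}}(\Field;R))$.

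For $\GG_n \neq \SL_n$ we have $\Levi_{n+1}^2(\Field) = \GL_2(\Field) \times \GG_{n-1}(\Field)$, and the induced map
\[
\HH_i(\GG_{n-1}(\Field);\St_{\GG_{n-1}}(\Field;R)) \to \HH_i(\Levi_{n+1}^2(\Field);\St_{\GL_2}(\Field;R) \otimes_R \St_{\GG_{n-1}}(\Field;R))
\]
is the Künneth cross product with the class $[\xi] \in \HH_0(\GL_2(\Field);\St_{\GL_2}(\Field;R)) = (\St_{\GL_2}(\Field;R))_{\GL_2(\Field)}$. By Lemma~\ref{lemma:killcoinvariants}, these coinvariants vanish, so $[\xi] = 0$ and the map is zero. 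For $\GG_n = \SL_n$, the subgroup $\SL_2(\Field) \times \SL_{n-1}(\Field)$ still lies in $\Levi_{n+1}^2(\Field)$, and the same cross-product argument goes through after restricting to this subgroup, using that $(\St_{\SL_2}(\Field;R))_{\SL_2(\Field)} = 0$ by Lemma~\ref{lemma:killcoinvariants} applied to $\SL_2$.

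The main technical obstacle I anticipate is the Reeder compatibility invoked in the factorization, namely the identification of the iterated $\ell = 1$ stabilization on Steinberg modules with the $\ell = 2$ Reeder map precomposed with the $\ell = 1$ Reeder for $\GL_2$. This is a transitivity statement that should follow from the explicit construction of the Reeder map in \cite{ReederSteinberg} via parabolic induction, but it is the one nontrivial compatibility check in the proof; once it is in hand, the rest of the argument is essentially formal.
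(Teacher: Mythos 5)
Your proof is correct and follows essentially the same strategy as the paper: factor the double stabilization $\HH_i(\GG_{n-2}(\Field);\St_{\GG_{n-2}}(\Field;R)) \to \HH_i(\GG_n(\Field);\St_{\GG_n}(\Field;R))$ through a term carrying a $\St_{\GL_2}(\Field;R)$ tensor factor, and reduce to $(\St_{\GL_2}(\Field;R))_{\SL_2(\Field)} = 0$, which is Lemma~\ref{lemma:killcoinvariants}. The only real difference is in which arrow of the factorization is shown to vanish and how: you observe that the first arrow, $\HH_i(\GG_{n-1}(\Field);\St_{\GG_{n-1}}) \to \HH_i(\Levi_{n+1}^2(\Field);\St_{\GL_2}\otimes\St_{\GG_{n-1}})$, is a homology cross-product with the class $[\xi]\in\HH_0(\GL_2(\Field);\St_{\GL_2}(\Field;R))$ and that this class dies in coinvariants; the paper instead notes that the image of $\GG_{n-2}(\Field)$ in $\GG_n(\Field)$ commutes with $\SL_2(\Field)\subset\Levi_n^2(\Field)$, so that, since inner automorphisms act trivially on homology, the second arrow $\HH_i(\GG_{n-2}(\Field);\St_{\GL_2}\otimes\St_{\GG_{n-2}}) \to \HH_i(\GG_n(\Field);\St_{\GG_n})$ annihilates $\SL_2(\Field)$-coinvariants. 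The Reeder transitivity you flag as the one nontrivial compatibility is also used, implicitly, in the paper's assertion that the double stabilization factors through the $\ell=2$ Reeder product, so it is not an extra burden peculiar to your route; also note that in the $\SL_n$ case the module whose $\SL_2(\Field)$-coinvariants must vanish is $\St_{\GL_2}(\Field;R)$, not $\St_{\SL_2}(\Field;R)$, though these are isomorphic as $\SL_2(\Field)$-modules so your conclusion stands.
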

\begin{proof}
Consider $n \geq N+1$.  By assumption, the map
\begin{equation}
\label{eqn:doublestab}
\HH_i(\GG_{n-2}(\Field);\St_{\GG_{n-2}}(\Field;R)) \rightarrow \HH_i(\GG_n(\Field);\St_{\GG_n}(\Field;R))
\end{equation}
obtained by iterating the stabilization map twice is surjective.  It is thus enough to show that the image of
this map is $0$.  We can factor this map as
\begin{align*}
\HH_i(\GG_{n-2}(\Field);\St_{\GG_{n-2}}(\Field;R)) &\rightarrow \HH_i(\GG_{n-2}(\Field);\St_{\GL_2}(\Field;R) \otimes \St_{\GG_{n-2}}(\Field;R)) \\
&\rightarrow \HH_i(\GG_n(\Field);\St_{\GG_n}(\Field;R)).
\end{align*}
Regard $\SL_2(\Field)$ as a subgroup
of $\GG_n(\Field)$ via the composition
\[
\SL_2(\Field) = \SL_2(\Field) \times 1 \hookrightarrow \Levi_n^2(\Field) \hookrightarrow \GG_n(\Field).
\]
The subgroup $\SL_2(\Field)$ of $\GG_n(\Field)$ commutes with the image of $\GG_{n-2}(\Field)$ 
in $\GG_n(\Field)$ under the map used to define \eqref{eqn:doublestab}.  Inner automorphisms
act trivially on homology, even with twisted coefficients; see \cite[Proposition III.8.1]{BrownCohomology}.
It follows that to show that the image of \eqref{eqn:doublestab} is $0$, it is enough to prove that
\[
(\St_{\GL_2}(\Field;R) \otimes \St_{\GG_{n-2}}(\Field;R))_{\SL_2(\Field)} = 0.
\]
This is equivalent to
\[
(\St_{\GL_2}(\Field;R))_{\SL_2(\Field)} = 0,
\]
which is one case of Lemma~\ref{lemma:killcoinvariants}.
\end{proof}

\section{The stabilizer subgroups}
\label{section:stabilizers}

This section constructs an isomorphism (Lemma \ref{lemma:vectstab} below)
that will play a fundamental role in
our proof of Theorem \ref{theorem:vanishing}.

Let $\GG_n$ be either $\GL_{n}$, $\SL_{n}$, $\Sp_{2n}$, $\SO_{n,n}$, or $\SO_{n,n+1}$. 
Let $\Field$ be a field and $R$ be a commutative
ring.  Fix some $1 \leq \ell \leq n$.  There is a map
\begin{equation}
\label{eqn:vectstab}
\HH_i(1 \times \GG_{n-\ell}(\Field);\St_{\GL_{\ell}}(\Field;R) \otimes \St_{\GG_{n-\ell}}(\Field;R)) \rightarrow \HH_i(\Stab_n^{\ell}(\Field);\Res^{\GG_n(\Field)}_{\Stab_n^{\ell}(\Field)} \St_{\GG_n}(\Field;R)).
\end{equation}
induced by the following two maps:
\begin{compactitem}
\item The inclusion map $1 \times \GG_{n-\ell}(\Field) \rightarrow \Stab_n^{\ell}(\Field)$.
Here we are regarding $\Stab_n^{\ell}(\Field)$ as a subgroup of $\Para_n^{\ell}(\Field)$
that contains $1 \times \GG_{n-\ell}(\Field) \subset \Levi_n^{\ell}(\Field) \subset \Para_n^{\ell}(\Field)$.
\item The Reeder product map 
$\St_{\GL_{\ell}}(\Field;R) \otimes \St_{\GG_{n-\ell}}(\Field;R) \rightarrow \St_{\GG_n}(\Field;R)$.
\end{compactitem}
Our main result is as follows.

\begin{lemma}
\label{lemma:vectstab}
Let $\Field$ be a field, let $R$ be a commutative ring, let $1 \leq \ell \leq n$, and let $i \geq 0$.  Then the map
\eqref{eqn:vectstab} is an isomorphism.
\end{lemma}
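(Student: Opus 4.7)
The plan is to exhibit $\Res^{\GG_n(\Field)}_{\Stab_n^\ell(\Field)} \St_{\GG_n}(\Field;R)$ as a module induced from the subgroup $1 \times \GG_{n-\ell}(\Field)$, after which the asserted isomorphism follows immediately from Shapiro's lemma.

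First I would establish the semidirect product decomposition
\[
\Stab_n^\ell(\Field) = \Uni_n^\ell(\Field) \rtimes \bigl(1 \times \GG_{n-\ell}(\Field)\bigr).
\]
The containment $\Uni_n^\ell(\Field) \subseteq \Stab_n^\ell(\Field)$ is immediate, since $\Uni_n^\ell(\Field)$ acts as the identity on $\Span{\vec{a}_1,\ldots,\vec{a}_\ell}$. Combining this with the Levi decomposition $\Para_n^\ell(\Field) = \Uni_n^\ell(\Field) \rtimes \Levi_n^\ell(\Field)$ reduces the decomposition claim to the assertion that the elements of $\Levi_n^\ell(\Field) \subseteq \GL_\ell(\Field) \times \GG_{n-\ell}(\Field)$ fixing each $\vec{a}_j$ are exactly $1 \times \GG_{n-\ell}(\Field)$, which is a short case check for each of the five families.

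Next I would apply Theorem \ref{theorem:reeder} together with Mackey's formula for the restriction from $\Para_n^\ell(\Field)$ to $\Stab_n^\ell(\Field)$. Since $\Uni_n^\ell(\Field) \subseteq \Stab_n^\ell(\Field)$ and $\Para_n^\ell(\Field) = \Uni_n^\ell(\Field) \cdot \Levi_n^\ell(\Field)$, the double coset space $\Stab_n^\ell(\Field) \backslash \Para_n^\ell(\Field) / \Levi_n^\ell(\Field)$ consists of a single point, and by the previous step the stabilizer is $\Stab_n^\ell(\Field) \cap \Levi_n^\ell(\Field) = 1 \times \GG_{n-\ell}(\Field)$. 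Mackey's formula therefore produces a $\Stab_n^\ell(\Field)$-equivariant isomorphism
\[
\Res^{\GG_n(\Field)}_{\Stab_n^\ell(\Field)} \St_{\GG_n}(\Field;R) \cong \Ind_{1 \times \GG_{n-\ell}(\Field)}^{\Stab_n^\ell(\Field)} \bigl(\St_{\GL_\ell}(\Field;R) \otimes \St_{\GG_{n-\ell}}(\Field;R)\bigr),
\]
and Shapiro's lemma converts this into the claimed isomorphism on $\HH_i$.

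The one subtle point I anticipate --- and the main obstacle to take care of --- is checking that the Shapiro isomorphism produced this way really agrees with the map \eqref{eqn:vectstab} as spelled out in the statement, rather than with some twisted variant of it. This reduces to verifying that, under the Mackey identification above, the canonical inclusion $m \mapsto 1 \otimes m$ of $\St_{\GL_\ell}(\Field;R) \otimes \St_{\GG_{n-\ell}}(\Field;R)$ into the induced module matches the ``$u=1$'' summand of Remark \ref{remark:reederdecompose}, and hence the Reeder product map. This should be essentially tautological from the way the Mackey decomposition is built out of the coset representatives $\Uni_n^\ell(\Field)$ for $\Para_n^\ell(\Field)/\Levi_n^\ell(\Field)$, but is worth spelling out carefully given the length of the chain of identifications.
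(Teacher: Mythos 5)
Your proposal is correct and follows essentially the same route as the paper's proof: restrict Reeder's isomorphism from $\Para_n^\ell(\Field)$ to $\Stab_n^\ell(\Field)$, apply the Mackey double-coset formula using that $\Uni_n^\ell(\Field)\subseteq\Stab_n^\ell(\Field)$ forces a single double coset with $\Stab_n^\ell(\Field)\cap\Levi_n^\ell(\Field)=1\times\GG_{n-\ell}(\Field)$, and finish by Shapiro's lemma. The final compatibility check you flag (that the Shapiro/Mackey isomorphism really is the map \eqref{eqn:vectstab}) is exactly the point the paper dispatches as ``easily seen.''
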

\begin{proof}
Shapiro's Lemma 
\cite[Proposition III.6.2]{BrownCohomology} gives an isomorphism
\begin{align*}
&\HH_i(1 \times \GG_{n-\ell}(\Field);\St_{\GL_{\ell}}(\Field;R) \otimes \St_{\GG_{n-\ell}}(\Field;R)) \\
&\ \ \ \ \ \cong
\HH_i(\Stab_{n}^{\ell}(\Field);\Ind_{1 \times \GG_{n-\ell}(\Field)}^{\Stab_{n}^{\ell}(\Field)} \St_{\GL_{\ell}}(\Field;R) \otimes \St_{\GG_{n-\ell}}(\Field;R)).
\end{align*}
Below we will prove that there is an isomorphism
\begin{equation}
\label{eqn:toconstruct}
\Ind_{1 \times \GG_{n-\ell}(\Field)}^{\Stab_{n}^{\ell}(\Field)} 
\St_{\GL_{\ell}}(\Field;R) \otimes \St_{\GG_{n-\ell}}(\Field;R)
\cong
\Res^{\GG_n(\Field)}_{\Stab_n^{\ell}(\Field)} \St_{\GG_n}(\Field;R).
\end{equation}
of $\Stab_{n}^{\ell}(\Field)$-representations.  Combined with the above, this will yield an isomorphism between the left
and right hand sides of \eqref{eqn:vectstab} which is easily seen to be the map in \eqref{eqn:vectstab}.

It remains to construct the isomorphism \eqref{eqn:toconstruct}.  
Since 
$\Stab_{n}^{\ell}(\Field) \subset \Para_{n}^{\ell}(\Field)$,
we can restrict the isomorphism given by Theorem \ref{theorem:reeder} and Lemma \ref{lemma:identifystlevi} 
to obtain an isomorphism
\begin{equation}
\label{eqn:construction}
\Res_{\Stab_{n}^{\ell}(\Field)}^{\Para_{n}^{\ell}(\Field)} \Ind_{\Levi_{n}^{\ell}(\Field)}^{\Para_{n}^{\ell}(\Field)} \St_{\GL_{\ell}}(\Field;R) \otimes \St_{\GG_{n-\ell}}(\Field;R) \cong \Res^{\GG_n(\Field)}_{\Stab_{n}^{\ell}(\Field)} \St_{\GG_n}(\Field;R).
\end{equation}
The unipotent radical of $\Para_n^{\ell}(\Field)$ is contained in $\Stab_n^{\ell}(\Field)$.
This implies that there is a single $(\Stab_{n}^{\ell}(\Field),\Levi_{n}^{\ell}(\Field))$-double coset in $\Para_{n}^{\ell}(\Field)$.
Also, 
$\Stab_{n}^{\ell}(\Field) \cap \Levi_{n}^{\ell}(\Field) = \GG_{n-\ell}(\Field)$.
The double coset formula \cite[Proposition III.5.6b]{BrownCohomology} 
therefore implies that the left side of \eqref{eqn:construction} is canonically isomorphic to
\[\Ind_{1 \times \GG_{n-\ell}(\Field)}^{\Stab_{n}^{\ell}(\Field)} \Res^{\Levi_{n}^{\ell}(\Field)}_{1 \times \GG_{n-\ell}(\Field)} \St_{\GL_{\ell}}(\Field;R) \otimes \St_{\GG_{n-\ell}}(\Field;R)
= \Ind_{1 \times \GG_{n-\ell}(\Field)}^{\Stab_{n}^{\ell}(\Field)} 
\St_{\GL_{\ell}}(\Field;R) \otimes \St_{\GG_{n-\ell}}(\Field;R),\]
as desired.
\end{proof}

The following alternate version of Lemma \ref{lemma:vectstab} will be useful.

\begin{corollary}
\label{corollary:vectstab2}
Let $\Field$ be a field, let $R$ be a commutative ring, let $1 \leq \ell \leq n$, and let 
$i \geq 0$.  Then there exists an isomorphism 
\[\St_{\GL_{\ell}}(\Field;R) \otimes \HH_i(\GG_{n-\ell}(\Field);\St_{\GG_{n-\ell}}(\Field;R)) \cong \HH_i(\Stab_{n}^{\ell}(\Field);\Res^{\GG_n(\Field)}_{\Stab_{n}^{\ell}(\Field)} \St_{\GG_n}(\Field;R)).\]
\end{corollary}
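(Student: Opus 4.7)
The plan is to derive Corollary~\ref{corollary:vectstab2} directly from Lemma~\ref{lemma:vectstab} by ``pulling'' the $\St_{\GL_{\ell}}(\Field;R)$ factor outside of the group homology. The key observation is that the subgroup $1 \times \GG_{n-\ell}(\Field)$ of $\Levi_n^{\ell}(\Field)$ acts trivially on the first tensor factor $\St_{\GL_{\ell}}(\Field;R)$, since this factor carries the action only of the $\GL_{\ell}(\Field)$-component of the Levi. Thus the $1 \times \GG_{n-\ell}(\Field)$-module $\St_{\GL_{\ell}}(\Field;R) \otimes \St_{\GG_{n-\ell}}(\Field;R)$ is a tensor product of a trivial module with a nontrivial one.

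The next ingredient I would use is that $\St_{\GL_{\ell}}(\Field;R)$ is a free (hence flat) $R$-module. This is immediate from the Solomon--Tits theorem: $\Tits_{\GL_{\ell}}(\Field)$ is homotopy equivalent to a wedge of $(\ell-2)$-spheres, so its top reduced homology with coefficients in any commutative ring $R$ is a free $R$-module (on the indexing set of the wedge).

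With these two facts in hand, the standard argument gives an isomorphism
\[
\HH_i\bigl(1 \times \GG_{n-\ell}(\Field);\, \St_{\GL_{\ell}}(\Field;R) \otimes \St_{\GG_{n-\ell}}(\Field;R)\bigr)
\;\cong\; \St_{\GL_{\ell}}(\Field;R) \otimes \HH_i\bigl(\GG_{n-\ell}(\Field);\, \St_{\GG_{n-\ell}}(\Field;R)\bigr).
\]
Concretely, pick a projective resolution $P_\bullet \to R$ over $R[\GG_{n-\ell}(\Field)]$. Because the action on $\St_{\GL_{\ell}}(\Field;R)$ is trivial, there is a natural identification of chain complexes
\[
P_\bullet \otimes_{R[\GG_{n-\ell}(\Field)]} \bigl(\St_{\GL_{\ell}}(\Field;R) \otimes \St_{\GG_{n-\ell}}(\Field;R)\bigr) \;\cong\; \St_{\GL_{\ell}}(\Field;R) \otimes_R \bigl(P_\bullet \otimes_{R[\GG_{n-\ell}(\Field)]} \St_{\GG_{n-\ell}}(\Field;R)\bigr),
\]
and since $\St_{\GL_{\ell}}(\Field;R)$ is flat over $R$ the functor $\St_{\GL_{\ell}}(\Field;R) \otimes_R -$ commutes with taking homology.

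Combining this with the isomorphism of Lemma~\ref{lemma:vectstab} yields the desired isomorphism. I do not expect any serious obstacle; the only point that requires a brief check is the freeness of the Steinberg module over $R$, which follows from Solomon--Tits as noted above.
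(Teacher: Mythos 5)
Your proposal is correct and follows the same argument as the paper: note that $1 \times \GG_{n-\ell}(\Field)$ acts trivially on the $\St_{\GL_\ell}(\Field;R)$ factor and that this factor is a free (hence flat) $R$-module by Solomon--Tits, pull it out of group homology, and then invoke Lemma~\ref{lemma:vectstab}. The paper's proof is just a terser version of the same computation.
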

\begin{proof}
Since $\St_{\GL_{\ell}}(\Field;R)$ is a free $R$-module, we have
\[\St_{\GL_{\ell}}(\Field;R) \otimes \HH_i(\GG_{n-\ell}(\Field);\St_{\GG_{n-\ell}}(\Field;R))
\cong
\HH_i(1 \times \GG_{n-\ell}(\Field);\St_{\GL_{\ell}}(\Field;R) \otimes \St_{\GG_{n-\ell}}(\Field;R)).\]
The corollary now follows from Lemma \ref{lemma:vectstab}.
\end{proof}

We will also need an explicit inverse 
\begin{equation}
\label{eqn:vectstabinv}
\HH_i(\Stab_n^{\ell}(\Field);\Res^{\GG_n(\Field)}_{\Stab_n^{\ell}(\Field)} \St_{\GG_n}(\Field;R))
\rightarrow 
\HH_i(1 \times \GG_{n-\ell}(\Field);\St_{\GL_{\ell}}(\Field;R) \otimes \St_{\GG_{n-\ell}}(\Field;R))
\end{equation}
to the isomorphism \eqref{eqn:vectstab}.  The map \eqref{eqn:vectstabinv} will
be induced by the following two maps:
\begin{compactitem}
\item The homomorphism $\Stab_n^{\ell}(\Field) \rightarrow 1 \times \GG_{n-\ell}(\Field)$
obtained by restricting the projection 
$\Para_n^{\ell}(\Field) \rightarrow \Levi_n^{\ell}(\Field)$ to $\Stab_n^{\ell}(\Field)$.
\item The map 
\[\St_{\GG_n}(\Field;R) \rightarrow \St_{\GL_{\ell}}(\Field;R) \otimes \St_{\GG_{n-\ell}}(\Field;R)\]
which equals the composition
\begin{align*}
\St_{\GG_n}(\Field;R) 
&= \bigoplus_{u \in \Uni_n^{\ell}(\Field)} u \cdot \left(\St_{\GL_{\ell}}\left(\Field;R\right) \otimes \St_{\GG_{n-\ell}}\left(\Field;R\right)\right) \\
&\rightarrow \St_{\GL_{\ell}}(\Field;R) \otimes \St_{\GG_{n-\ell}}(\Field;R),
\end{align*}
where the first equality comes from a combination of Theorem \ref{theorem:reeder} and
Lemma \ref{lemma:identifystlevi} (see Remark \ref{remark:reederdecompose})
and the last arrow takes 
$u \cdot x \in u \cdot (\St_{\GL_{\ell}}(\Field;R) \otimes \St_{\GG_{n-\ell}}(\Field;R))$
to $x \in \St_{\GL_{\ell}}(\Field;R) \otimes \St_{\GG_{n-\ell}}(\Field;R)$.
We will call this map the {\em Reeder projection map}.
\end{compactitem}
It is clear that these define a map of the form \eqref{eqn:vectstabinv}.  The following 
lemma says that this is an inverse to \eqref{eqn:vectstab}.

\begin{lemma}
\label{lemma:vectstabinv}
Let $\Field$ be a field, let $R$ be a commutative ring, let $1 \leq \ell \leq n$, and let $i \geq 0$.  Then the map
\eqref{eqn:vectstabinv} is an inverse to the map \eqref{eqn:vectstab}.
\end{lemma}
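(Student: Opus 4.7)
The plan is to prove that the composition \eqref{eqn:vectstabinv} $\circ$ \eqref{eqn:vectstab} equals the identity map on $\HH_i(1 \times \GG_{n-\ell}(\Field); \St_{\GL_\ell}(\Field;R) \otimes \St_{\GG_{n-\ell}}(\Field;R))$. Since Lemma \ref{lemma:vectstab} already tells us that \eqref{eqn:vectstab} is an isomorphism, this single identity will automatically force \eqref{eqn:vectstabinv} to be its two-sided inverse, with no separate verification of the reverse composition required.

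Both \eqref{eqn:vectstab} and \eqref{eqn:vectstabinv} are induced by a group homomorphism together with a compatible map of coefficient modules, and such constructions are functorial under composition. It therefore suffices to check that the composition of group homomorphisms and the composition of coefficient maps are each identities. For the group maps, the composition is $1 \times \GG_{n-\ell}(\Field) \hookrightarrow \Stab_n^\ell(\Field) \twoheadrightarrow 1 \times \GG_{n-\ell}(\Field)$, where the second arrow is the restriction of the parabolic projection $\Para_n^\ell(\Field) \rightarrow \Levi_n^\ell(\Field)$; since $1 \times \GG_{n-\ell}(\Field) \subset \Levi_n^\ell(\Field)$ and the projection is the identity on the Levi, this composition is the identity. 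For the coefficient maps, Remark \ref{remark:reederdecompose} shows that the Reeder product map is exactly the inclusion of the $u=1$ summand in the decomposition $\St_{\GG_n}(\Field;R) = \bigoplus_{u \in \Uni_n^\ell(\Field)} u \cdot (\St_{\GL_\ell}(\Field;R) \otimes \St_{\GG_{n-\ell}}(\Field;R))$, while the Reeder projection by definition sends $u \cdot x$ to $x$; these are inverse on the $u=1$ summand, so their composition is the identity.

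The only point that must be checked with any care, rather than being read off immediately, is that the Reeder projection actually defines a $\Stab_n^\ell(\Field)$-equivariant map, where the target carries the action pulled back along the projection $\Stab_n^\ell(\Field) \rightarrow 1 \times \GG_{n-\ell}(\Field)$. This uses that $\Uni_n^\ell(\Field) \subset \Stab_n^\ell(\Field)$ and $\Stab_n^\ell(\Field) \cap \Levi_n^\ell(\Field) = 1 \times \GG_{n-\ell}(\Field)$, so that every element of $\Stab_n^\ell(\Field)$ factors as $u \cdot h$ with $u \in \Uni_n^\ell(\Field)$ and $h \in 1 \times \GG_{n-\ell}(\Field)$; combined with the normality of $\Uni_n^\ell(\Field)$ in $\Para_n^\ell(\Field)$, one computes that $(uh) \cdot (u' x) = (u \cdot hu'h^{-1}) \cdot (hx)$, which the Reeder projection sends to $hx$, matching the required action. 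Once this compatibility is recorded, the lemma follows formally from the two identity checks above.
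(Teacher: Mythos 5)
Your proposal is correct and is essentially the paper's own proof: the paper also observes that the two compositions $1 \times \GG_{n-\ell}(\Field) \to \Stab_n^{\ell}(\Field) \to 1 \times \GG_{n-\ell}(\Field)$ and $\St_{\GL_{\ell}}(\Field;R) \otimes \St_{\GG_{n-\ell}}(\Field;R) \to \St_{\GG_n}(\Field;R) \to \St_{\GL_{\ell}}(\Field;R) \otimes \St_{\GG_{n-\ell}}(\Field;R)$ are the identity, and concludes. Your added remarks — invoking Lemma~\ref{lemma:vectstab} to get two-sidedness from one-sidedness, and explicitly checking $\Stab_n^{\ell}(\Field)$-equivariance of the Reeder projection — are useful elaborations of details the paper leaves implicit.
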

\begin{proof}
Immediate from the fact that the compositions
\[1 \times \GG_{n-\ell}(\Field) \rightarrow \Stab_n^{\ell}(\Field) \rightarrow 1 \times \GG_{n-\ell}(\Field)\]
and
\[\St_{\GL_{\ell}}(\Field;R) \otimes \St_{\GG_{n-\ell}}(\Field;R) \rightarrow \St_{\GG_n}(\Field;R) \rightarrow \St_{\GL_{\ell}}(\Field;R) \otimes \St_{\GG_{n-\ell}}(\Field;R)\]
of the maps used to define \eqref{eqn:vectstab} and \eqref{eqn:vectstabinv} equal the
identity.
\end{proof}

\section{Vanishing} \label{section:vanishing}

This section is devoted to the proof of Theorem \ref{theorem:vanishing}.  The actual
proof is in \S \ref{section:theproof}.  This is preceded by two sections of preliminaries.

\subsection{Equivariant homology} 
\label{section:equivariant}

In our proof of Theorem \ref{theorem:vanishing}, we will need some basic facts about
equivariant homology.  A basic reference is \cite[Chapter VII.7]{BrownCohomology}.

Let $G$ be a group, let $X$ be a semisimplicial set on which $G$ acts, let
$R$ be a ring, and let $M$ be an $R[G]$-module.  Let $EG$ be a contractible 
semisimplicial set on which $G$ acts freely and let $BG = EG/G$, so $BG$ is
a classifying space for $G$.  Denote by
$EG \times_G X$ the quotient of $EG \times X$ by the diagonal action of $G$. This is known as the {\em Borel construction}.  The homotopy type of $EG \times_G X$ does not depend on the choice of $EG$.  The projection $EG \times_G X \rightarrow EG/G = BG$
induces a homomorphism $\pi_1(EG \times_G X) \rightarrow \pi_1(BG) = G$.  Via
this homomorphism, we can regard $M$ as a local coefficient system on $EG \times_G X$.
The {\em $G$-equivariant homology groups} of $X$ with coefficients in $M$, denoted
$\HH_{\ast}^G(X;M)$, are the homology groups of $EG \times_G X$ with respect to
the local coefficient system $M$.

\begin{lemma}
\label{lemma:equivariantcalculates}
If $X$ is $\ell$-connected, then the above map $EG \times_G X \rightarrow EG/G = BG$
induces an isomorphism $\HH_i^G(X;M) \cong \HH_i(G;M)$ for $0 \leq i \leq \ell$ and a surjection $\HH_{\ell+1}^G(X;M) \to \HH_{\ell+1}(X;M)$.
\end{lemma}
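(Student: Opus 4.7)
The plan is to apply the Leray--Serre spectral sequence of the fibration $X \to EG \times_G X \to BG$, with coefficients in the local system on the total space pulled back from $M$ on $BG$. Since $EG$ is simply connected, this local system restricts to the constant system $M$ on the fiber $X$; and since $X$ is $\ell$-connected (in particular connected), there are no further monodromy subtleties along the fiber. The $E^2$ page will then be identified as
\[
E^2_{pq} = \HH_p(G;\HH_q(X;M)),
\]
abutting to $\HH_{p+q}^G(X;M)$, and the edge homomorphism $\HH_i^G(X;M) \to \HH_i(G;M)$ is exactly the map induced by the projection $EG \times_G X \to BG$ appearing in the statement.

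Next, I would use the $\ell$-connectedness of $X$ to conclude that $\HH_0(X;M)=M$ and $\HH_q(X;M)=0$ for $1 \leq q \leq \ell$, so that on the $E^2$ page only the row $q=0$ and the rows $q \geq \ell+1$ are nonzero. On any diagonal $p+q=i$ with $i \leq \ell+1$, the entry $E^2_{i,0} = \HH_i(G;M)$ is a permanent cycle: a differential $d_r$ out of $E^r_{i,0}$ lands in $E^r_{i-r,\,r-1}$, which vanishes because either $1 \leq r-1 \leq \ell$ forces the target into a vanishing row, or $r \geq \ell+2$ combined with $i \leq \ell+1$ forces $i-r<0$; and no differential can hit $E^r_{i,0}$ since that would require negative $q$. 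Hence $E^\infty_{i,0} = \HH_i(G;M)$.

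Reading off the conclusion: for $i \leq \ell$ the only nonzero entry on the diagonal $p+q=i$ is $E^\infty_{i,0}$, so the edge map yields the isomorphism $\HH_i^G(X;M) \cong \HH_i(G;M)$. For $i = \ell+1$ the abutment may additionally receive a contribution from $E^\infty_{0,\ell+1}$ sitting at the bottom of the filtration, but $E^\infty_{\ell+1,0} = \HH_{\ell+1}(G;M)$ is still the top filtration quotient of $\HH_{\ell+1}^G(X;M)$, yielding the desired surjection onto $\HH_{\ell+1}(G;M)$ (which I take to be the intended target in the statement, the ``$X$'' appearing there being an apparent typo).

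The main point requiring care is setting up the coefficient system on $EG \times_G X$ correctly and checking that the edge map of the Leray--Serre spectral sequence agrees with the map induced by $EG \times_G X \to BG$; once these standard identifications are pinned down, the vanishing-row bookkeeping is essentially automatic, and neither the isomorphism range nor the surjectivity at the boundary degree involves any subtle differential.
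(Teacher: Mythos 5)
Your proof is correct, but it takes a genuinely different route from the paper's. The paper argues cellularly: since $EG \times X$ is $\ell$-connected and carries a free $G$-action, one can attach free $G$-cells of dimension at least $\ell+2$ to make it contractible, producing a new classifying space $EG'/G$ whose $(\ell+1)$-skeleton coincides with that of $EG \times_G X$; the isomorphism and surjection then follow because cellular homology with local coefficients through degree $\ell$ (resp.\ a surjection in degree $\ell+1$) is determined by the $(\ell+1)$-skeleton. You instead run the Leray--Serre spectral sequence of the fiber bundle $X \to EG \times_G X \to BG$, use $\ell$-connectedness of $X$ to kill the rows $1 \leq q \leq \ell$, and read off the edge homomorphism. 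Both are standard and sound. The paper's argument is more elementary (no spectral sequence, just skeleta), but it implicitly requires identifying the skeletal inclusion with the projection to $BG$ via the essentially unique homotopy equivalence between the two models of $BG$; your route avoids that identification entirely, at the cost of invoking the Leray--Serre machinery with local coefficients and carefully checking that the edge map is the projection-induced map. You also correctly diagnose the typo in the statement: the target of the surjection should be $\HH_{\ell+1}(G;M)$, not $\HH_{\ell+1}(X;M)$.
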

\begin{proof}
The group $G$ acts freely on $EG \times X$ and $EG \times X$ is $\ell$-connected.  Viewing
$EG \times X$ as a CW-complex, we
can make $EG \times X$ contractible by adding cells of dimension at least $(\ell+2)$.
We conclude that there exists a classifying space for $G$ whose $(\ell+1)$-skeleton
equals the $(\ell+1)$-skeleton of $EG \times_G X$.  The lemma follows.
\end{proof}

Our main tool for understanding $\HH_{\ast}^G(X;M)$ is the following spectral
sequence, which is constructed in \cite[Equation VII.7.7]{BrownCohomology}.

\begin{lemma}
\label{lemma:spectralsequence}
For all $p \geq 0$, let $\Sigma_p$ be a set containing exactly one representative
for each orbit of the action of $G$ on the $p$-simplices of $X$.  For $\sigma \in \Sigma_p$,
let $G_{\sigma}$ be the stabilizer of $\sigma$.  
Then there is a first quadrant spectral sequence 
\[
E^1_{p,q} = \bigoplus_{\sigma \in \Sigma_p} \HH_q(G_{\sigma};\Res^G_{G_{\sigma}} M) \Longrightarrow \HH_{p+q}^G(X;M).
\]
\end{lemma}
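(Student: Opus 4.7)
The plan is to construct this spectral sequence as the filtration spectral sequence of a suitable bicomplex, following the standard hyperhomology pattern. First I would fix a free resolution $F_\bullet \to \mathbb{Z}$ of the trivial module over $\mathbb{Z}[G]$; concretely one can take $F_\bullet = C_*(EG)$ for the given contractible free $G$-semisimplicial set. The $G$-action on $X$ makes the semisimplicial chain complex $C_*(X)$ into a complex of $\mathbb{Z}[G]$-modules, and I would form the double complex
\[
B_{p,q} = F_q \otimes_{\mathbb{Z}[G]} (C_p(X) \otimes M),
\]
where $C_p(X) \otimes M$ carries the diagonal $G$-action. Since $F_\bullet$ consists of free $\mathbb{Z}[G]$-modules, the total complex of $B_{\bullet,\bullet}$ is quasi-isomorphic to $(C_*(EG) \otimes C_*(X) \otimes M)_G$, which is precisely the cellular chain complex of $EG \times_G X$ with coefficients in the local system $M$. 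Hence the total homology of $B_{\bullet,\bullet}$ is $\HH^G_{p+q}(X;M)$.

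Next I would consider the spectral sequence obtained by filtering $B_{\bullet,\bullet}$ by $p$-degree (the $X$-direction), computing homology first in $q$. Because tensoring with $F_\bullet$ over $\mathbb{Z}[G]$ computes $\HH_q(G;-)$, the resulting $E^1$ page is
\[
E^1_{p,q} = \HH_q(G;\, C_p(X) \otimes M).
\]
I would then decompose $C_p(X)$ into its $G$-orbits: since $\Sigma_p$ contains one representative per orbit with stabilizer $G_\sigma$, we have $C_p(X) \cong \bigoplus_{\sigma \in \Sigma_p} \mathbb{Z}[G/G_\sigma]$ as $\mathbb{Z}[G]$-modules, and the tensor identity gives
\[
C_p(X) \otimes M \cong \bigoplus_{\sigma \in \Sigma_p} \Ind_{G_\sigma}^G \Res^G_{G_\sigma} M.
\]
Applying Shapiro's lemma summand by summand then produces the claimed identification
\[
E^1_{p,q} = \bigoplus_{\sigma \in \Sigma_p} \HH_q(G_\sigma; \Res^G_{G_\sigma} M),
\]
and convergence to $\HH^G_{p+q}(X;M)$ is automatic since $B_{\bullet,\bullet}$ lives in the first quadrant.

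The main thing to be careful about is the bookkeeping of $G$-actions. Specifically, one must verify that the diagonal $G$-action on $C_*(X) \otimes M$ is precisely what makes the bicomplex compute the Borel-construction homology with its twisted local system, and that the orbit decomposition of $C_p(X)$ is $\mathbb{Z}[G]$-equivariant in the form needed to apply the tensor identity and Shapiro's lemma. Once these identifications are pinned down, the rest is entirely formal, and this is essentially the derivation presented in \cite[Chapter VII.7]{BrownCohomology}.
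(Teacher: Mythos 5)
Your proposal is correct and is essentially the same derivation as the cited reference: the paper offers no proof of its own but simply points to \cite[Equation VII.7.7]{BrownCohomology}, and your bicomplex $F_q \otimes_{\mathbb{Z}[G]} (C_p(X)\otimes M)$, the filtration by $p$, the orbit decomposition $C_p(X)\cong\bigoplus_\sigma \mathbb{Z}[G/G_\sigma]$, the tensor identity, and Shapiro's lemma are exactly the ingredients Brown uses there. The one point worth making explicit (the paper does so in a remark after the lemma) is that the clean identification $C_p(X)\otimes M\cong\bigoplus_\sigma\Ind_{G_\sigma}^G\Res_{G_\sigma}^G M$ with no orientation twist relies on $X$ being a semisimplicial set, so that $G_\sigma$ fixes $\sigma$ pointwise; for an ordinary simplicial complex one would have to insert the orientation character, as Brown does in general.
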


\begin{remark}
In \cite[Equation VII.7.7]{BrownCohomology}, the action of $G_\sigma$ on $M$ is
twisted by an ``orientation character''; however, this is unnecessary
in our situation,
 since we
are working with semisimplicial sets rather than ordinary simplicial complexes
(the point being that in the geometric realization, the setwise stabilizer of a
simplex stabilizes the simplex pointwise).
\end{remark}

\subsection{Complexes of partial bases}
\label{section:complex}

Let $\Field$ be a field and let $\GG_n$ be either $\GL_{n}$, $\SL_{n}$, $\Sp_{2n}$, $\SO_{n,n}$, or $\SO_{n,n+1}$.  
To prove Theorem \ref{theorem:vanishing}, we will need to construct a highly connected space
$\Cpx_n(\Field)$ on which $\GG_n(\Field)$ acts.  The definition of this complex is
as follows.
\begin{compactitem}
\item If $\GG_n = \GL_n$ or $\GG_n = \SL_n$, then 
define $\Cpx_n(\Field)$ to be the {\em complex of partial bases} for $\Field^n$, 
i.e.\ the semisimplicial
complex whose $\ell$-simplices are ordered sequences $[\vec{v}_0,\ldots,\vec{v}_{\ell}]$ 
of linearly independent elements of $\Field^n$.
\item If $\GG_n = \Sp_{2n}$ or $\GG_n = \SO_{n,n}$ or $\GG_n = \SO_{n,n+1}$ and
$\Field^m$ is the vector space upon which $\GG_n(\Field)$ acts (so $m$ is either $2n$
or $2n+1$), then define $\Cpx_n(\Field)$ to be the {\em complex of partial isotropic
bases} for $\Field^m$, i.e.\ the semisimplicial
complex whose $\ell$-simplices are ordered sequences $[\vec{v}_0,\ldots,\vec{v}_{\ell}]$ of 
linearly independent elements of $\Field^m$ that span an isotropic subspace.
\end{compactitem}
The following theorem summarizes the properties of $\Cpx_n(\Field)$.

\begin{theorem}
\label{theorem:complexproperties}
Let $\Field$ be a field and let $\GG_n$ be either 
$\GL_{n}$, $\SL_{n}$, $\Sp_{2n}$, $\SO_{n,n}$, or $\SO_{n,n+1}$.  The following then
hold.
\begin{compactenum}[\indent \rm 1.]
\item The group $\GG_n(\Field)$ acts transitively on the $\ell$-cells of $\Cpx_n(\Field)$
for all $0 \leq \ell < n-1$.
\item The space $\Cpx_n(\Field)$ is $f(n)$-connected where $f(n)$ is given by:
  \begin{enumerate}[\rm (a)]
  \item $f(n) = n-2$ if $\GG_n$ is either $\GL_n$ or $\SL_n$,
  \item $f(n) = \frac{n-3}{2}$ if $\GG_n$ is either $\Sp_{2n}$, $\SO_{n,n}$, or $\SO_{n,n+1}$.
  \end{enumerate}
\end{compactenum}
\end{theorem}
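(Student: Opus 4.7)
The plan has two parts matching the two assertions.

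For transitivity (1), I would invoke Witt's extension theorem. In the linear case, an $\ell$-simplex with $\ell < n-1$ consists of $\ell+1 \leq n-1$ linearly independent vectors in $\Field^n$, which extend to an ordered basis. Since $\GL_n(\Field)$ acts transitively on ordered bases, it acts transitively on these simplices, and the $n - \ell - 1 \geq 1$ remaining basis vectors allow one to rescale and achieve determinant $1$, giving transitivity for $\SL_n$ as well. In the isotropic case, Witt's extension theorem says any linear isomorphism between isotropic subspaces of $\Field^m$ that respects the form extends to an element of $\GG_n(\Field)$, so transitivity on ordered isotropic sequences of length $\ell+1$ follows, since the maximal isotropic dimension in all three cases is $n > \ell$.

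For connectivity (2), these bounds are classical results from the homological stability literature. The $(n-2)$-connectivity of the complex of partial bases in $\Field^n$ is due to Maazen (thesis); see also van der Kallen. The $\frac{n-3}{2}$-connectivity of the complex of partial isotropic bases is due to Mirzaii--van der Kallen in the symplectic case, and the orthogonal cases follow by parallel techniques (compare work of Charney and Friedrich). My proposal is simply to cite these and apply them directly.

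If I were to reprove them, the natural approach is induction on $n$: the link of a vertex $[\vec{v}]$ in $\Cpx_n(\Field)$ is naturally identified with the complex of partial (isotropic) bases in a smaller ambient space: in the linear case the quotient $\Field^n / \Span{\vec{v}}$, and in the isotropic case the hyperbolic-quotient $\vec{v}^{\perp}/\Span{\vec{v}}$ (which has the same type of form, with maximal isotropic rank one smaller). Combined with a Quillen-style bad simplex argument, connectivity of links bootstraps to connectivity of the whole complex. The main obstacle lies in the isotropic cases, where tracking hyperbolic pairs and handling the quadratic form (notably in characteristic $2$, where one must avoid appealing to the bilinear form alone) introduces the combinatorial difficulty that makes the isotropic connectivity bound roughly half the linear one.
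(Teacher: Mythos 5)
Your proposal matches the paper's approach: the transitivity claim is dispatched as standard (your Witt's-theorem argument, with the rescaling/determinant adjustment using the spare basis vector afforded by $\ell < n-1$, is the right elaboration), and the connectivity bounds are obtained by citing Maazen/van der Kallen for the linear case and Mirzaii--van der Kallen and Friedrich for the isotropic cases. The only small detail worth adding is that invoking Friedrich's theorem requires noting that the unitary stable rank of a field is $1$, a hypothesis the paper explicitly verifies by citation.
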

\begin{proof}
The first assertion is well known (and also holds for $\ell = n-1$ except when $\GG_n = \SL_n$). As for the second, Maazen proved in his thesis \cite{MaazenThesis} that $\Cpx_n(\Field)$ is $(n-2)$-connected for $\GG_n = \GL_n$ and $\GG_n = \SL_n$.  See \cite{VDK} for a published proof of a more general result. Friedrich proved in \cite[Theorem 3.23]{FriedrichConn} 
that $\Cpx_n(\Field)$ is $\frac{n-3}{2}$-connected for $\GG_n = \Sp_{2n}$ and $\GG_n = \SO_{n,n}$ and $\GG_n = \SO_{n,n+1}$
(for $\Sp_{2n}$ and $\SO_{n,n}$, this was proven earlier in \cite[Theorem 7.3]{mvdk}). To apply the cited result of Friedrich to our situation, we need the fact that the unitary stable rank of a field is $1$ (see, e.g., \cite[Example 6.5]{mvdk}).
\end{proof}

\subsection{The proof of Theorem \ref{theorem:vanishing}} 
\label{section:theproof}

Let us first recall the statement of the theorem.
Let $\GG_n$ be either $\GL_{n}$, $\SL_{n}$, $\Sp_{2n}$, $\SO_{n,n}$, or $\SO_{n,n+1}$.
Also, let $\Field$ be a field and $R$ be a commutative ring.  Our goal is to prove that
$\HH_i(\GG_n(\Field); \St_{\GG_n}(\Field;R)) = 0$ for $n \geq 2i+2$ and that there exists a surjection
\begin{align} \label{eqn:toprovesurjective1}
\HH_i(\GG_{2i}(\Field);\St_{\GG_{2i}}(\Field;R)) \to \HH_i(\GG_{2i+1}(\Field); \St_{\GG_{2i+1}}(\Field; R)).
\end{align}
Of course, this surjection will be induced by the stabilization map defined in
\S \ref{section:stability}.

The proof is by induction on $i$. We begin with the base case $i=0$.  Lemma \ref{lemma:killcoinvariants} says
that $\HH_0(\GG_{n}(\Field); \St_{\GG_{n}}(\Field; R)) = 0$ for $n \geq 2$, so we only need to show that
the map \eqref{eqn:toprovesurjective1} is a surjection for $i=0$.
For the domain, $\GG_0(\Field)$ is the trivial group.  By our convention regarding the
empty set discussed at the end of the introduction, we thus have $\St_{\GG_0}(\Field; R) = R$,
and hence $\HH_i(\GG_0(\Field);\St_{\GG_0}(\Field;R)) = R$. 
To simplify the codomain, we have several cases.
\begin{compactitem}
\item $\GG_1 = \GL_1$ or $\GG_1 = \SO_{1,1}$.  In fact, these groups are
isomorphic and are commutative, so
$\St_{\GG_1}(\Field;R) = R$ in these cases and \eqref{eqn:toprovesurjective1} is an
isomorphism.
\item $\GG_1 = \SL_1$.  The group $\SL_1$ is the trivial group and thus 
$\St_{\GG_1}(\Field;R) = R$ and \eqref{eqn:toprovesurjective1} is an
isomorphism.
\item $\GG_1 = \Sp_2 \cong \SL_2$ or $\GG_1 = \SO_{2,1} \cong \PSL_2$.  
These groups have isomorphic Steinberg representations and the action of $\SL_2(\Field)$
on $\St_{\SL_2}(\Field;R)$ factors through $\PSL_2(\Field)$.
This case thus follows from Lemma \ref{lemma:killcoinvariants}, which says that $\HH_0(\SL_2(\Field);\St_{\SL_2}(\Field;R)) = 0$.
\end{compactitem}
This completes the base case.

Assume now that $i>0$ and that the desired result is true for all smaller values of $i$.
We will prove that the stabilization map
\begin{equation}
\label{eqn:toprovesurjective2}
\HH_i(\GG_{n-1}(\Field);\St_{\GG_{n-1}}(\Field;R)) \to \HH_i(\GG_{n}(\Field); \St_{\GG_{n}}(\Field; R))
\end{equation}
is surjective for $n \geq 2i+1$.  Lemma \ref{lemma:reducetostab} will then imply that $\HH_i(\GG_{n}(\Field); \St_{\GG_{n}}(\Field; R)) = 0$ for $n \geq 2i+2$, and the theorem will follow.

Fix some $n \geq 2i+1$ and let $\Field^m$ be the standard vector space representation of $\GG_n(\Field)$ (so $m$ is either $n$, $2n$, or $2n+1$).  
Let $\{\vec{a}_1,\ldots,\vec{a}_n\}$ be the vectors in $\Field^m$ such that
\[
\Stab_n^{\ell}(\Field) = \Set{$M \in \GG_n(\Field)$}{$M(\vec{a}_j) = \vec{a}_j$ for $1 \leq j \leq \ell$}
\]
for $1 \leq \ell \leq n$.
Combining the second conclusion of Theorem~\ref{theorem:complexproperties} with Lemma~\ref{lemma:equivariantcalculates}, we have a surjection
\begin{equation}
\label{eqn:identifyequivariant}
\HH_i^{\GG_n(\Field)}(\Cpx_n(\Field);\St_{\GG_n}(\Field; R)) \to \HH_i(\GG_n(\Field); \St_{\GG_n}(\Field; R)).
\end{equation}
We will analyze $\HH_i^{\GG_n(\Field)}(\Cpx_n(\Field);\St_{\GG_n}(\Field; R))$ using
the spectral sequence from Lemma \ref{lemma:spectralsequence}.  To calculate
its $E^1$-page, observe that the first conclusion of Theorem \ref{theorem:complexproperties}
says that $\GG_n(\Field)$ acts transitively on the $p$-simplices of $\Cpx_n(\Field)$ for $0 \leq p < n-1$.
The stabilizer of the $(p-1)$-simplex
$[\vec{a}_{1},\ldots,\vec{a}_p]$ is $\Stab_n^p(\Field)$, so the spectral sequence in Lemma \ref{lemma:spectralsequence} has
\begin{equation}
\label{eqn:e1description}
E^1_{p,q} = \HH_q(\Stab_{n}^{p+1}(\Field);\Res^{\GG_n(\Field)}_{\Stab_{n}^{p+1}(\Field)} \St_{\GG_n}(\Field;R)),
\end{equation}
for $0 \leq p < n-1$.

We will prove that all of the terms on the $p+q = i$ line of the $E^{\infty}$-page of our spectral sequence vanish except for possibly the term $E^{\infty}_{0,i}$.  To do this, consider $p,q \geq 0$ with $p+q=i$ and $p \geq 1$.  The case $p=1$ and $n=2i+1$ is exceptional and must be treated separately.  To avoid getting bogged down here,
we postpone this calculation until \S \ref{section:differential} below, where it appears as Lemma \ref{lemma:killexceptional}.\footnote{This exceptional case could be avoided at the cost of only proving that $\HH_i(\GG_n;\St_{\GG_n}(\Field;R)) = 0$ for $n \geq 3i+2$ instead of for $n \geq 2i+2$.}

We thus can assume that either $p \geq 2$ or that $n \geq 2i+2$.
Since $n \geq 2i+1$, we certainly have $p<n-1$, so $E^1_{p,q}$ is in the regime where the above description of the $E^1$-page holds.  Applying Corollary~\ref{corollary:vectstab2} to \eqref{eqn:e1description}, we see that 
\[
E^1_{p,q} = \St_{\GL_{p+1}}(\Field;R) \otimes \HH_q(\GG_{n-p-1}(\Field);\St_{\GG_{n-p-1}}(\Field;R)).
\]
To see that this vanishes, it is enough to show that
$\HH_q(\GG_{n-p-1}(\Field);\St_{\GG_{n-p-1}}(\Field;R)) = 0$.  This is a
consequence of our inductive hypothesis; to see that it applies, observe that if $p \geq 2$ then
\[
n-p-1 \geq (2i+1) - p - 1 = 2(p+q)-p = 2q+p \geq 2q+2,
\]
while if $p = 1$ and $n \geq 2i+2$ then
\[
n-p-1 \geq (2i+2) - 1 - 1 = 2(p+q) = 2q+2.
\]
This implies that $E^1_{p,q} = 0$, and thus that $E^{\infty}_{p,q} = 0$.

The $p+q=i$ line of the $E^\infty$-page of our spectral sequence thus only has a single potentially nonzero entry, namely $E^\infty_{0,i}$, and this is a quotient of 
\[
E^1_{0,i} = \HH_i(\Stab_{n}^{1}(\Field);\Res^{\GG_n(\Field)}_{\Stab_{n}^{1}(\Field)} \St_{\GG_n}(\Field; R)).
\]
This entry thus surjects onto $\HH_i^{\GG_n(\Field)}(\Cpx_n(\Field);\St_{\GG_n}(\Field; R))$.
Combining this with the surjection \eqref{eqn:identifyequivariant}, we obtain
a surjection
\begin{equation}
\label{eqn:surjection}
\HH_i(\Stab_{n}^{1}(\Field);\Res^{\GG_n(\Field)}_{\Stab_{n}^{1}(\Field)} \St_{\GG_n}(\Field; R)) \longrightarrow \HH_i(\GG_n(\Field);\St_{\GG_n}(\Field; R)).
\end{equation}
Examining the construction of our spectral sequence in \cite[Chapter VII.7]{BrownCohomology},
it is easy to see that this comes from the map induced by the inclusion
$\Stab_{n}^{1}(\Field) \hookrightarrow \GG_n(\Field)$.  Combining
\eqref{eqn:surjection} with the isomorphism 
\[
\HH_i(\GG_{n-1}(\Field); \St_{\GG_{n-1}}(\Field; R)) \xrightarrow{\cong} \HH_i(\Stab_{n}^{1}(\Field);\Res^{\GG_n(\Field)}_{\Stab_{n}^{1}(\Field)} \St_{\GG_n}(\Field; R))
\]
given by the $\ell=1$ case of Corollary~\ref{corollary:vectstab2}, 
we conclude that \eqref{eqn:toprovesurjective2} is a surjection, as desired.

\section{Killing the exceptional term in the spectral sequence}
\label{section:differential}

This section is devoted to proving the vanishing result postponed from the proof of Theorem \ref{theorem:vanishing}
in \S \ref{section:theproof}.  The notation in this section is thus identical to that in 
\S \ref{section:theproof}:
\begin{compactitem}
\item $\GG_n$ is either $\GL_{n}$, $\SL_{n}$, $\Sp_{2n}$, $\SO_{n,n}$, or $\SO_{n,n+1}$.
\item $\Field$ is a field and $R$ is a commutative ring.
\item $i>0$ and $n = 2i+1$ (the only case that remained in that section).
\item $\Field^m$ is the standard vector space representation of
$\GG_n(\Field)$ (so $m$ is either $n$, $2n$, or $2n+1$).
\item $\{\vec{a}_1,\ldots,\vec{a}_n\}$ is the set of vectors in $\Field^m$ such that
\[
\Stab_n^{\ell}(\Field) = \Set{$M \in \GG_n(\Field)$}{$M(\vec{a}_j) = \vec{a}_j$ for $1 \leq j \leq \ell$}
\]
for $1 \leq \ell \leq n$.
\item $E^{r}_{p,q}$ is the spectral sequence from Lemma \ref{lemma:spectralsequence}
converging to $\HH_i^{\GG_n(\Field)}(\Cpx_n(\Field);\St_{\GG_n}(\Field;R))$.
\end{compactitem}
What we must prove is as follows.

\begin{lemma}
\label{lemma:killexceptional}
Let the notation be as above, and assume that the stabilization map
\begin{equation}
\label{eqn:killassumption}
\HH_{i-1}(\GG_{n-3}(\Field);\St_{\GG_{n-3}}(\Field;R)) \to \HH_{i-1}(\GG_{n-2}(\Field); \St_{\GG_{n-2}}(\Field; R))
\end{equation}
is surjective.  Then the differential $E^1_{2,i-1} \rightarrow E^1_{1,i-1}$ is surjective, and thus
$E^{\infty}_{1,i-1} = 0$.
\end{lemma}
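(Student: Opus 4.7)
The plan is to analyze the differential $d^1 \colon E^1_{2,i-1} \to E^1_{1,i-1}$ explicitly under the tensor decomposition given by Corollary~\ref{corollary:vectstab2}. By that corollary,
\begin{align*}
E^1_{2,i-1} &\cong \St_{\GL_3}(\Field;R) \otimes \HH_{i-1}(\GG_{n-3}(\Field); \St_{\GG_{n-3}}(\Field;R)), \\
E^1_{1,i-1} &\cong \St_{\GL_2}(\Field;R) \otimes \HH_{i-1}(\GG_{n-2}(\Field); \St_{\GG_{n-2}}(\Field;R)).
\end{align*}
The differential is the alternating sum $d_2 - d_1 + d_0$ of the three face maps of the orbit representative $\sigma = [\vec{a}_1, \vec{a}_2, \vec{a}_3]$; of these, $d_2$ is distinguished because its target $[\vec{a}_1, \vec{a}_2]$ is already the standard $1$-simplex representative, so no basis-permutation conjugation is required.

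My first goal would be to compute $d_2$ under these tensor identifications using the explicit inverse iso of Lemma~\ref{lemma:vectstabinv}, and show that $d_2(\alpha \otimes x) = \pi(\alpha) \otimes \mathrm{stab}_\ast(x)$, where $\pi \colon \St_{\GL_3} \twoheadrightarrow \St_{\GL_2}$ is the $\GL_3$-Reeder projection for the parabolic stabilizing $\Span{\vec{a}_1, \vec{a}_2}$, and $\mathrm{stab}_\ast$ is the stabilization map of \S\ref{section:stability}. The verification rests on two facts. On the group side, the Levi projection $\Stab_n^2(\Field) \twoheadrightarrow 1 \times \GG_{n-2}(\Field)$, restricted to $\Stab_n^3(\Field)$, factors through $1 \times \GG_{n-3}(\Field)$ and becomes the stabilization inclusion. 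On the coefficient side, the composition of the Reeder product inclusion $\St_{\GL_3} \otimes \St_{\GG_{n-3}} \hookrightarrow \St_{\GG_n}$ (for $\Para_n^3$) with the Reeder projection $\St_{\GG_n} \twoheadrightarrow \St_{\GL_2} \otimes \St_{\GG_{n-2}}$ (for $\Para_n^2$) can be computed by refining both Reeder decompositions through their common finer refinement at the parabolic of $\GG_n$ stabilizing the flag $\Span{\vec{a}_1, \vec{a}_2} \subsetneq \Span{\vec{a}_1, \vec{a}_2, \vec{a}_3}$, after which the composite reads off as $\pi$ on the $\GL$-factor and the Reeder product $\St_{\GG_{n-3}} \hookrightarrow \St_{\GG_{n-2}}$ on the $\GG$-factor (the latter inducing $\mathrm{stab}_\ast$ on homology). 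Since $\pi$ is split by the Reeder product $s \colon \St_{\GL_2} \hookrightarrow \St_{\GL_3}$ and $\mathrm{stab}_\ast$ is surjective by hypothesis of the lemma, the face map $d_2$ is already surjective onto $E^1_{1,i-1}$.

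The remaining task is to control $d_0$ and $d_1$. An entirely parallel analysis shows that $d_j = \phi_j \otimes \mathrm{stab}_\ast$ for every $j$: the same $\mathrm{stab}_\ast$ appears because the group-level calculation factoring through $\GG_{n-3} \hookrightarrow \GG_{n-2}$ is unaffected by the basis-permutation conjugations needed for $d_0, d_1$, while $\phi_0$ and $\phi_1$ are Reeder projections for the $\GL_3$-parabolics stabilizing $\Span{\vec{a}_2, \vec{a}_3}$ and $\Span{\vec{a}_1, \vec{a}_3}$, composed with the obvious basis-permutation identifications with $\St_{\GL_2}$. Thus $d^1 = \phi \otimes \mathrm{stab}_\ast$ with $\phi = \phi_0 - \phi_1 + \phi_2$, and the proof reduces to showing that $\phi \colon \St_{\GL_3}(\Field;R) \to \St_{\GL_2}(\Field;R)$ is surjective. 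I would establish this by analyzing $\phi$ on the image of the Reeder section $s$: for $\alpha = s(\beta)$, we have $\phi_2(\alpha) = \beta$ by construction, and a concrete comparison of the three Reeder decompositions of $\St_{\GL_3}(\Field;R)$ corresponding to the three $2$-dimensional subspaces spanned by pairs from $\{\vec{a}_1, \vec{a}_2, \vec{a}_3\}$ either shows that $\phi_0, \phi_1$ vanish on $\mathrm{im}(s)$---so that $\phi$ restricted to $\mathrm{im}(s)$ is already $\pi$---or produces explicit correction terms in $\St_{\GL_3}$ that cancel the contributions of $\phi_0$ and $\phi_1$.

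The main obstacle is exactly this last step: verifying the purely $\GL_3$-level surjectivity of $\phi$ by unwinding how the three Reeder decompositions of $\St_{\GL_3}(\Field;R)$ interact. The tensor factorization $d^1 = \phi \otimes \mathrm{stab}_\ast$ isolates the hard part of the problem on the $\GL_3$-side, independent of both the coefficient ring $R$ and the classical group $\GG_n$, but the resulting combinatorial bookkeeping for the three Reeder projections (together with the basis-permutation identifications) is the essential content of the lemma.
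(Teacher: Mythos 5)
Your reduction of the lemma is correct and matches the paper's strategy: using Corollary~\ref{corollary:vectstab2} to rewrite both $E^1_{2,i-1}$ and $E^1_{1,i-1}$ as tensor products, showing that each face map $d_j$ factors as $\phi_j \otimes \mathrm{stab}_\ast$ (the paper's Lemma~\ref{lemma:identifypartiali}), and thereby reducing surjectivity of the differential to surjectivity of a map $\phi = \phi_0 - \phi_1 + \phi_2 \colon \St_{\GL_3}(\Field;R) \to \St_{\GL_2}(\Field;R)$ that lives purely at the $\GL_3$ level (the paper's map $\zeta$ in \S\ref{section:differentialsummary}--\ref{section:zetasurjective}).

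However, the final step is where your proposal leaves a genuine gap, and the first of the two alternatives you offer is actually false. You suggest that perhaps ``$\phi_0, \phi_1$ vanish on $\mathrm{im}(s)$,'' so that $\phi|_{\mathrm{im}(s)}$ would already equal the Reeder projection $\pi$. That does not happen: taking $\alpha = \Apartment{1 & a & 0 \\ 0 & 1 & 0 \\ 0 & 0 & 1} \in \mathrm{im}(s)$, the term corresponding to $\phi_1$ (deleting $\vec{a}_1$) contributes $-\pi\left(\Apartment{1 & 0 & 0 \\ 0 & 1 & 0 \\ 0 & a & 1}\right)$, which is a nonzero element of $\St_{\GL_2}(\Field;R)$ whenever $a \neq 0$. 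So $\phi|_{\mathrm{im}(s)}$ is not the identity (in basis coordinates), and the ``correction terms'' in your second alternative are unavoidable. But you never exhibit them, and the needed identity is not formal: the paper produces it by bringing in two pieces of structure you don't invoke --- the Solomon--Tits basis of $\St_{\GL_2}(\Field;R)$ by upper unitriangular apartment classes (Theorem~\ref{theorem:uppertriangular}) and the boundary relation among apartment classes (Lemma~\ref{lemma:relation}) --- and then performing an explicit matrix computation (equation~\eqref{eqn:apartmentcalc}) that, after applying the apartment relation to the obstruction term, shows $\zeta\left(\Apartment{1 & a & 0 \\ 0 & 1 & 0 \\ 0 & 0 & 1}\right) = A_0 + A_a$ for $a \neq 0$ and $= A_0$ for $a = 0$, hence every basis element $A_a$ is in the image. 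You correctly flag this as ``the essential content of the lemma,'' but a proof must actually supply it; as written, the proposal establishes the reduction but not the lemma.
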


The proof of Lemma \ref{lemma:killexceptional} is divided into five sections:
\begin{compactitem}
\item In \S \ref{section:differentialidentify}, we give an explicit form for the differential
$E^1_{2,i-1} \rightarrow E^1_{1,i-1}$.
\item In \S \ref{section:differentialisomorphism}, we translate that explicit form into one involving
the stabilization map \eqref{eqn:killassumption}.
\item In \S \ref{section:differentialsummary}, we summarize what remains to be proved.
\item In \S \ref{section:apartments}, we give some needed background information about
apartments.
\item In \S \ref{section:zetasurjective}, we finish off the proof of Lemma \ref{lemma:killexceptional}.
\end{compactitem}

\subsection{Identifying the differential}
\label{section:differentialidentify}

The notation is as in the beginning of \S \ref{section:differential}.  In this section, we identify
the differential $E^1_{2,i-1} \rightarrow E^1_{1,i-1}$.
Since $n = 2i+1$ and $i > 0$, we have $1 < n-1$, so $E^1_{1,i-1}$ is as described in \eqref{eqn:e1description}, i.e.\
\[E^1_{1,i-1} \cong  \HH_{i-1}(\Stab_{n}^{2}(\Field);\Res^{\GG_{n}(\Field)}_{\Stab_{n}^{2}(\Field)} \St_{\GG_{n}}(\Field;R)).\]
If $i=1$, then we do not have $2 < n-1$, so in this case $E^1_{2,i-1}$ is not
as described in \eqref{eqn:e1description}.  The issue is that $\GG_n(\Field)$ might not act
transitively on the $2$-simplices of $\Cpx_n(\Field)$ (this is actually only a problem
for $\GG_n = \SL_n$).  However, for all values of $i$ 
it is still the case that $E^1_{2,i-1}$ contains
\[\HH_{i-1}(\Stab_{n}^{3}(\Field);\Res^{\GG_{n}(\Field)}_{\Stab_{n}^{3}(\Field)} \St_{\GG_{n}}(\Field;R))\]
as a summand.  The restriction of the differential $E^1_{2,i-1} \rightarrow E^1_{1,i-1}$
to this summand is a map 
\begin{equation}
\label{eqn:describepartial}
\begin{split}
\partial\colon & \HH_{i-1}(\Stab_{n}^{3}(\Field);\Res^{\GG_{n}(\Field)}_{\Stab_{n}^{3}(\Field)} \St_{\GG_{n}}(\Field;R)) \longrightarrow \HH_{i-1}(\Stab_{n}^{2}(\Field);\Res^{\GG_{n}(\Field)}_{\Stab_{n}^{2}(\Field)} \St_{\GG_{n}}(\Field;R)). 
\end{split}
\end{equation}
To prove Lemma~\ref{lemma:killexceptional}, it is enough to prove that 
$\partial$ is surjective.

We can describe $\partial$ using the recipe described in \cite[Chapter VII.8]{BrownCohomology}.  Recall that $\Stab_{n}^{3}(\Field)$ is the $\GG_{n}(\Field)$-stabilizer of the ordered sequence of vectors 
$\sigma = [\vec{a}_1,\vec{a}_2,\vec{a}_3]$.  For $1 \leq m \leq 3$, let $\sigma_m$ be the ordered
sequence obtained by deleting $\vec{a}_m$ from $\sigma$ and let
$(\GG_n(k))_{\sigma_m}$ denote the $\GG_n(k)$-stabilizer of $\sigma_m$.  We then have $\partial = \partial_1 - \partial_2 + \partial_3$, where
$\partial_m$ is the composition
\begin{align*}
&\HH_{i-1}(\Stab_{n}^{3}(\Field);\Res^{\GG_{n}(\Field)}_{\Stab_{n}^{3}(\Field)} \St_{\GG_{n}}(\Field;R))\\
&\ \ \ \ \ \stackrel{\partial_m'}{\longrightarrow} \HH_{i-1}((\GG_{n}(\Field))_{\sigma_m};\Res^{\GG_{n}(\Field)}_{(\GG_{n}(\Field))_{\sigma_m}} \St_{\GG_{n}}(\Field;R))\\
&\ \ \ \ \ \stackrel{\partial_m''}{\longrightarrow} \HH_{i-1}(\Stab_{n}^{2}(\Field);\Res^{\GG_{n}(\Field)}_{\Stab_{n}^{2}(\Field)} \St_{\GG_{n}}(\Field;R))
\end{align*}
of the following two maps.
\begin{compactitem}
\item $\partial_m'$ is the map induced by the inclusion 
$\Stab_{n}^{3}(\Field) \hookrightarrow (\GG_{n}(\Field))_{\sigma_m}$.
\item Define $\kappa_m \in \GG_n(\Field)$ as follows.  First, $\kappa_3 = \text{id}$.  For $m \in \{1,2\}$, we do the following.
\begin{compactitem}
\item If $\GG_{n} = \GL_{n}$ or $\GG_{n} = \SL_{n}$, then $\kappa_m \in \SL_{n}(\Field)$ is the map $\Field^{n} \rightarrow \Field^{n}$ that takes $\vec{a}_m$ to
$\vec{a}_3$, takes $\vec{a}_3$ to $-\vec{a}_m$, and fixes all the other basis vectors.
\item If $\GG_{n} = \Sp_{2n}$ or $\GG_{n} = \SO_{n,n}$ or $\GG_{n} = \SO_{n,n+1}$, then $\kappa_m \in \GG_{n}(\Field)$ is
the map $\Field^m \rightarrow \Field^m$ defined as follows.  Let $\vec{b}_1,\ldots,\vec{b}_{n}$ be the standard
basis vectors for $\Field^m$ that pair with the $\vec{a}_j$ (there is one additional standard basis vector if
$\GG_{n} = \SO_{n,n+1}$).  Then $\kappa_m$ takes $\vec{a}_m$ to $\vec{a}_3$, takes
$\vec{a}_3$ to $-\vec{a}_m$, takes $\vec{b}_m$ to $\vec{b}_3$, takes $\vec{b}_3$ to 
$-\vec{b}_m$, and fixes all the other basis vectors.  
\end{compactitem}
Then $\partial_m''$ is induced by the map $(\GG_{n}(\Field))_{\sigma_m} \rightarrow \Stab_{n}^2(\Field)$ that
takes $g \in (\GG_{n}(\Field))_{\sigma_m}$ to $\kappa_m g \kappa_m^{-1}$ and the map
$\St_{\GG_{n}}(\Field;R) \rightarrow \St_{\GG_{n}}(\Field;R)$ that takes
$x \in \St_{\GG_{n}}(\Field;R)$ to $\kappa_m(x) \in \St_{\GG_{n}}(\Field;R)$.
We remark that easier choices of $\kappa_m$ (without the signs) could be used
for $\GG_n \neq \SL_n$, but we chose the ones above to make our later formulas more uniform.
\end{compactitem}
This is summarized in the following lemma.

\begin{lemma}
\label{lemma:describedifferential}
Let the notation be as above.  Then the map $\partial$ in \eqref{eqn:describepartial} equals
$\partial_1 - \partial_2 + \partial_3$, where $\partial_m$ is induced by the
map $\Stab_{n}^3(\Field) \rightarrow \Stab_{n}^2(\Field)$ defined via the formula
\[g \mapsto \kappa_m g \kappa_m^{-1} \quad \quad (g \in \Stab_{n}^3(\Field))\]
and the map $\St_{\GG_{n}}(\Field;R) \rightarrow \St_{\GG_{n}}(\Field;R)$ defined via the formula
\[x \mapsto \kappa_m(x) \quad \quad (x \in \St_{\GG_{n}}(\Field;R)).\]
\end{lemma}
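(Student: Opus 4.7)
The plan is to deduce the formula by directly applying the standard recipe for the $d^1$ differential in Brown's equivariant homology spectral sequence \cite[Chapter VII.8]{BrownCohomology} to the summand of $E^1_{2,i-1}$ indexed by the standard $2$-simplex $\sigma = [\vec{a}_1, \vec{a}_2, \vec{a}_3]$. That recipe expresses this restriction of $d^1$ as an alternating sum of face contributions $\sum_{m=1}^3 (-1)^{m+1} \partial_m$, which with our indexing convention is exactly $\partial_1 - \partial_2 + \partial_3$; here $\partial_m$ is the composition of (i) the inclusion-induced map $\HH_{i-1}(\Stab_n^3(\Field);\St_{\GG_n}(\Field;R)) \to \HH_{i-1}((\GG_n(\Field))_{\sigma_m};\St_{\GG_n}(\Field;R))$ and (ii) the transport isomorphism to $\HH_{i-1}(\Stab_n^2(\Field);\St_{\GG_n}(\Field;R))$ determined by any group element that moves $\sigma_m$ to the chosen orbit representative $\tau = [\vec{a}_1, \vec{a}_2]$.

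The remaining task is to verify that $\kappa_m$, as defined in the preceding discussion, can serve as such a transport element. For $m = 3$ this is immediate, since $\sigma_3 = \tau$ and $\kappa_3 = \mathrm{id}$. For $m = 1, 2$ a direct computation from the defining formulas shows that $\kappa_m^{-1}(\vec{a}_1)$ and $\kappa_m^{-1}(\vec{a}_2)$ each lie in $\{\pm\vec{a}_1, \pm\vec{a}_2, \pm\vec{a}_3\}$ and are scalar multiples of vertices of $\sigma_m$. Since the pointwise stabilizer of an ordered tuple of vectors is unchanged by rescaling individual entries, this implies that conjugation $g \mapsto \kappa_m g \kappa_m^{-1}$ indeed carries $(\GG_n(\Field))_{\sigma_m}$ into $\Stab_n^2(\Field)$, and the induced action on the Steinberg module is the transport $x \mapsto \kappa_m(x)$ dictated by the equivariant coefficient system on $E\GG_n(\Field) \times_{\GG_n(\Field)} \Cpx_n(\Field)$. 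The sign twists in the definition of $\kappa_m$ (the ``$-$'' in $\vec{a}_3 \mapsto -\vec{a}_m$, and the analogous sign on $\vec{b}_m$ in the symplectic and orthogonal cases) are chosen so that $\kappa_m$ actually belongs to $\GG_n(\Field)$, guaranteeing determinant one in the $\SL_n$ case and preserving the relevant bilinear or quadratic form in the other cases; they are invisible at the level of pointwise stabilizers.

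With these verifications in hand, the lemma is just the explicit form of Brown's recipe on the summand of $\sigma$. There is no substantial obstacle: the alternating signs are forced by the semisimplicial boundary, and the coefficient map is forced by the standard construction of the Borel-type spectral sequence. The only care needed is the bookkeeping in the previous paragraph, and this is mild because the pointwise stabilizer of the tuple $\sigma_m$ is tolerant of the sign twists in $\kappa_m$.
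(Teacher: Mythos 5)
Your proposal takes essentially the same route as the paper. The paper does not give a separate proof of Lemma~\ref{lemma:describedifferential} at all: the statement is explicitly ``summarized in the following lemma'' from the preceding paragraph, which simply unpacks the recipe for the $d^1$ differential from \cite[Chapter VII.8]{BrownCohomology} applied to the summand indexed by $\sigma=[\vec{a}_1,\vec{a}_2,\vec{a}_3]$, yielding $\partial=\partial_1-\partial_2+\partial_3$ with each $\partial_m$ a composition of an inclusion-induced map and a transport given by $\kappa_m$. You reproduce this reading of Brown's recipe and then add a verification that the paper leaves implicit, namely that $\kappa_m^{-1}(\vec a_1)$ and $\kappa_m^{-1}(\vec a_2)$ are nonzero scalar multiples of vertices of $\sigma_m$, so that conjugation by $\kappa_m$ really does carry the pointwise stabilizer $(\GG_n(\Field))_{\sigma_m}$ into $\Stab_n^2(\Field)$; this verification is correct and is a reasonable thing to spell out. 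The observation about the sign twists being there only to keep $\kappa_m$ inside $\GG_n(\Field)$ likewise matches the paper's own remark that ``easier choices of $\kappa_m$ (without the signs) could be used for $\GG_n\neq\SL_n$.'' In short: same approach, with one small extra sanity check made explicit.
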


\subsection{Bringing in the stabilization map}
\label{section:differentialisomorphism}

The notation is as in the beginning of \S \ref{section:differential}.  Fix some $1 \leq m \leq 3$, and let
$\partial_m$ and $\kappa_m$ be as in Lemma \ref{lemma:describedifferential}.  Applying the isomorphism
in Corollary \ref{corollary:vectstab2} to the domain and codomain of $\partial_m$, we obtain a homomorphism
\begin{align*}
\hpartial_m\colon &\St_{\GL_3}(\Field;R) \otimes \HH_{i-1}(\GG_{n-3}(\Field);\St_{\GG_{n-3}}(\Field;R))\\
&\ \ \ \ \ \rightarrow \St_{\GL_2}(\Field;R) \otimes \HH_{i-1}(\GG_{n-2}(\Field);\St_{\GG_{n-2}}(\Field;R)).
\end{align*}
Our goal in this section is to prove that $\hpartial_m$ is the tensor product of the stabilization
map 
\begin{equation}
\label{eqn:stabilizationmappartial}
\HH_{i-1}(\GG_{n-3}(\Field);\St_{\GG_{n-3}}(\Field;R)) \rightarrow \HH_{i-1}(\GG_{n-2}(\Field);\St_{\GG_{n-2}}(\Field;R))
\end{equation}
with the map $\zeta_m\colon \St_{\GL_3}(\Field;R) \rightarrow \St_{\GL_2}(\Field;R)$
defined as follows.  Let $\{\vec{a}_1,\vec{a}_2,\vec{a}_3\}$ be the standard
basis for $\Field^3$.  Let $\hkappa_3 = \text{id} \in \SL_3(\Field)$, 
and for $m \in \{1,2\}$ let
$\hkappa_m \in \SL_3(\Field)$ be the element that 
takes $\vec{a}_m$ to $\vec{a}_3$, takes $\vec{a}_3$ to $-\vec{a}_m$, and fixes
all the other basis vectors.  Then $\zeta_m$ is
the composition
\[\St_{\GL_3}(\Field;R) \stackrel{\hkappa_m}{\longrightarrow} \St_{\GL_3}(\Field;R)
\longrightarrow \St_{\GL_2}(\Field;R) \otimes \St_{\GL_1}(\Field;R) \cong \St_{\GL_2}(\Field;R),\]
where the second arrow is the Reeder projection map (see \S \ref{section:stabilizers}) and
the final isomorphism comes from the fact that $\St_{\GL_1}(\Field;R) = R$.

The main result of this section is then as follows.

\begin{lemma}
\label{lemma:identifypartiali}
Let the notation be as above.  Then $\hpartial_m$ is the tensor product of $\zeta_m$ with the 
stabilization map \eqref{eqn:stabilizationmappartial}.
\end{lemma}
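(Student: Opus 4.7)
The plan is to unpack $\hpartial_m$ via the explicit descriptions of Corollary~\ref{corollary:vectstab2} (whose forward direction is Lemma~\ref{lemma:vectstab} and whose inverse is Lemma~\ref{lemma:vectstabinv}) and then check that the induced group-level and coefficient-level maps match the two tensor factors of $\zeta_m \otimes (\text{stabilization})$. Tracing through, $\hpartial_m$ is induced at the group level by the composition
\[
1 \times \GG_{n-3}(\Field) \hookrightarrow \Stab_n^3(\Field) \xrightarrow{\kappa_m(-)\kappa_m^{-1}} \Stab_n^2(\Field) \twoheadrightarrow 1 \times \GG_{n-2}(\Field),
\]
and at the coefficient level by the Reeder product into $\St_{\GG_n}(\Field;R)$, followed by the action of $\kappa_m$, followed by the Reeder projection onto $\St_{\GL_2}(\Field;R) \otimes \St_{\GG_{n-2}}(\Field;R)$.

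For the group-level composition, the key observation is that $\kappa_m$ moves only the vectors $\{\vec{a}_m, \vec{a}_3\}$ (together with $\{\vec{b}_m, \vec{b}_3\}$ in the symplectic and orthogonal cases), each of which is fixed pointwise by $1 \times \GG_{n-3}(\Field)$; hence $\kappa_m$ commutes with $1 \times \GG_{n-3}(\Field)$ and the conjugation step is the identity on this subgroup. It then suffices to recognize the composite $1 \times \GG_{n-3}(\Field) \hookrightarrow \Stab_n^2(\Field) \twoheadrightarrow 1 \times \GG_{n-2}(\Field)$ as the standard stabilization inclusion of \S\ref{section:stability}, which is immediate from the definitions (an element of $\GG_{n-3}(\Field)$ acts trivially on $\vec{a}_3$ and, when applicable, $\vec{b}_3$).

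For the coefficient-level composition, first observe that by its construction $\kappa_m$ preserves both $\Span{\vec{a}_1,\vec{a}_2,\vec{a}_3}$ and $\Span{\vec{b}_1,\vec{b}_2,\vec{b}_3}$ (when those are defined), so $\kappa_m \in \Levi_n^3(\Field)$, with $\GL_3$-component exactly $\hkappa_m$. The $\Levi_n^3(\Field)$-equivariance of the Reeder product (Theorem~\ref{theorem:reeder}) lets us pull $\kappa_m$ across the Reeder product as $\hkappa_m \otimes 1$, rewriting the coefficient composition as $(\hkappa_m \otimes 1)$ followed by the ``Reeder in, Reeder project out'' map
\[
\St_{\GL_3}(\Field;R) \otimes \St_{\GG_{n-3}}(\Field;R) \longrightarrow \St_{\GL_2}(\Field;R) \otimes \St_{\GG_{n-2}}(\Field;R).
\]

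The remaining step is to identify this in-and-out map with $P_{\GL_3} \otimes S_{\GG}$, where $P_{\GL_3}\colon \St_{\GL_3}(\Field;R) \to \St_{\GL_2}(\Field;R) \otimes \St_{\GL_1}(\Field;R) = \St_{\GL_2}(\Field;R)$ is the Reeder projection featuring in $\zeta_m$, and $S_{\GG}\colon \St_{\GG_{n-3}}(\Field;R) = \St_{\GL_1}(\Field;R) \otimes \St_{\GG_{n-3}}(\Field;R) \to \St_{\GG_{n-2}}(\Field;R)$ is the coefficient part of the stabilization. My strategy is to introduce the nested parabolic $Q(\Field) \subset \GG_n(\Field)$ stabilizing the flag $\Span{\vec{a}_1,\vec{a}_2} \subsetneq \Span{\vec{a}_1,\vec{a}_2,\vec{a}_3}$, whose Levi is $\GL_2(\Field) \times \GL_1(\Field) \times \GG_{n-3}(\Field)$: decomposing $\St_{\GL_3}(\Field;R)$ according to the unipotent radical $U$ of the $\GL_3(\Field)$-parabolic stabilizing $\Span{\vec{a}_1,\vec{a}_2}$ (which via $\GL_3(\Field) \subset \Levi_n^3(\Field)$ one checks embeds into $\Uni_n^2(\Field)$) and using $\Levi$-equivariance lets each summand of $\St_{\GL_3}(\Field;R) \otimes \St_{\GG_{n-3}}(\Field;R) \hookrightarrow \St_{\GG_n}(\Field;R)$ be rewritten as a $\Uni_n^2(\Field)$-translate of an element in the Reeder image of $\St_{\GL_2}(\Field;R) \otimes \St_{\GG_{n-2}}(\Field;R)$, whereupon the formula for Reeder projection reads off exactly $P_{\GL_3} \otimes S_{\GG}$. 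The main obstacle is a transitivity property of Reeder's construction, namely that the two three-fold composites $\St_{\GL_2}(\Field;R) \otimes \St_{\GL_1}(\Field;R) \otimes \St_{\GG_{n-3}}(\Field;R) \to \St_{\GG_n}(\Field;R)$ obtained by factoring the Reeder map through $\Para_n^2(\Field)$ versus $\Para_n^3(\Field)$ agree with the single Reeder map attached to $Q(\Field)$; verifying this will require going back to the specific construction in \cite{ReederSteinberg} rather than using only the abstract statement of Theorem~\ref{theorem:reeder}.
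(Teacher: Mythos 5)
Your argument follows the same overall route as the paper: unpack $\hpartial_m$ as a five-arrow composition via Lemma~\ref{lemma:vectstab} and Lemma~\ref{lemma:vectstabinv}, treat the group-level and coefficient-level parts separately, use that $\kappa_m$ lies in $\GL_3(\Field)\times 1 \subset \Levi_n^3(\Field)$ with $\GL_3$-component $\hkappa_m$ to pull it through the Reeder product by equivariance, and then reduce to showing that the ``Reeder in, project out'' composite $\St_{\GL_3}\otimes\St_{\GG_{n-3}}\to\St_{\GG_n}\to\St_{\GL_2}\otimes\St_{\GG_{n-2}}$ factors as the tensor product of the $\GL_3$-Reeder projection with the $\GG_{n-3}\to\GG_{n-2}$ Reeder product. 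The paper packages this last step by naming the two maps $\Psi'$ and $\Phi$, decomposing $\St_{\GL_3}(\Field;R) = \bigoplus_{u\in\UniGL_3^2(\Field)} u\cdot(\St_{\GL_2}\otimes\St_{\GL_1})$, and checking the formula $\Psi'((u\cdot(x\otimes y))\otimes z) = x\otimes(y\otimes z)$ on each summand; this is essentially the same calculation you outline via the nested parabolic $Q(\Field)$.

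The one place where you are more explicit than the paper --- and rightly so --- is in flagging the transitivity of Reeder's construction. When the paper asserts ``Examining the definition of $\Psi'$, we see that $\Psi'((u\cdot(x\otimes y))\otimes z) = x\otimes(y\otimes z)$,'' it is implicitly using exactly the compatibility you describe: that $R_3((x\otimes y)\otimes z) = R_2(x\otimes R'(y\otimes z))$ for the three Reeder product maps attached to the parabolics $\Para_n^3$, $\Para_n^2$, and $\ParaGL_{n-2}^1$ inside $\GG_{n-2}$, together with the observation that $u\in\UniGL_3^2(\Field)$ maps into $\Uni_n^2(\Field)$. You are correct that this compatibility is not a formal consequence of the statement of Theorem~\ref{theorem:reeder} and must be read off from Reeder's explicit construction in \cite{ReederSteinberg}; the paper elides this. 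Your group-level analysis (that conjugation by $\kappa_m$ is the identity on $1\times\GG_{n-3}(\Field)$, and that $1\times\GG_{n-3}(\Field)\hookrightarrow\Stab_n^2(\Field)\twoheadrightarrow 1\times\GG_{n-2}(\Field)$ is the standard stabilization inclusion) is also correct and is left to the reader in the paper. In short: same approach, but your proposal makes explicit a compatibility check that the paper treats as obvious, which is a genuine improvement in rigor even if it adds no new ideas.
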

\begin{proof}
By construction, $\hpartial_m$ equals the composition
\begin{align*}
&\St_{\GL_3}(\Field;R) \otimes \HH_{i-1}(\GG_{n-3}(\Field);\St_{\GG_{n-3}}(\Field;R))\\
&\ \ \ \ \ \stackrel{\cong}{\longrightarrow} \HH_{i-1}(1 \times \GG_{n-3}(\Field);\St_{\GL_3}(\Field;R) \otimes \St_{\GG_{n-3}}(\Field;R))\\
&\ \ \ \ \ \stackrel{\cong}{\longrightarrow} \HH_{i-1}(\Stab_n^3(\Field);\St_{\GG_n}(\Field;R))\\
&\ \ \ \ \ \stackrel{\partial_m}{\longrightarrow} \HH_{i-1}(\Stab_n^2(\Field);\St_{\GG_n}(\Field;R))\\
&\ \ \ \ \ \stackrel{\cong}{\longrightarrow} \HH_{i-1}(1 \times \GG_{n-2}(\Field);\St_{\GL_2}(\Field;R) \otimes \St_{\GG_{n-2}}(\Field;R))\\
&\ \ \ \ \ \stackrel{\cong}{\longrightarrow} \St_{\GL_2}(\Field;R) \otimes \HH_{i-1}(\GG_{n-2}(\Field);\St_{\GG_{n-2}}(\Field;R))
\end{align*}
where the various maps are as follows:
\begin{compactitem}
\item The first and last arrows use the fact that $\St_{\GL_3}(\Field;R)$ and
$\St_{\GL_2}(\Field;R)$ are free $R$-modules (cf.\ the proof of Corollary
\ref{corollary:vectstab2}).
\item The second arrow is the map described in Lemma \ref{lemma:vectstab}, that
is, the map induced by the inclusion 
$1 \times \GG_{n-3}(\Field) \hookrightarrow \Stab_n^3(\Field)$ and the Reeder
product map 
$\St_{\GL_3}(\Field;R) \otimes \St_{\GG_{n-3}}(\Field;R) \rightarrow \St_{\GG_n}(\Field;R)$.
\item The third arrow is the map $\partial_m$ described in Lemma \ref{lemma:describedifferential}, that is, the map induced by the map $\Stab_n^3(\Field) \rightarrow \Stab_n^2(\Field)$
given by conjugation by $\kappa_m$ and the map $\St_{\GG_n}(\Field; R) \rightarrow \St_{\GG_n}(\Field; R)$
induced by $\kappa_m$.
\item The fourth arrow is the map described in Lemma \ref{lemma:vectstabinv}, that is,
the map induced by the projection 
$\Stab_n^2(\Field) \rightarrow 1 \times \GG_{n-2}(\Field)$ together with the Reeder
projection map 
$\St_{\GG_n}(\Field;R) \rightarrow \St_{\GL_2}(\Field;R) \otimes \St_{\GG_{n-2}}(\Field;R)$.
\end{compactitem}
We must show that this composition equals the indicated tensor product of maps.  This will take some work.

Define $\Psi$ to be the composition
\begin{align*}
\St_{\GL_3}(\Field;R) \otimes \St_{\GG_{n-3}}(\Field;R)
&\longrightarrow \St_{\GG_n}(\Field;R) \\
&\stackrel{\kappa_m}{\longrightarrow} \St_{\GG_n}(\Field;R) \\
&\longrightarrow \St_{\GL_2}(\Field;R) \otimes \St_{\GG_{n-2}}(\Field;R),
\end{align*}
where the first map is the Reeder product map and the last map is the Reeder projection
map.  Also, define $\Phi$ to be the composition
\begin{align*}
\St_{\GL_3}(\Field;R) \otimes \St_{\GG_{n-3}}(\Field;R)
&\longrightarrow \St_{\GL_2}(\Field;R) \otimes \St_{\GL_1}(\Field;R) \otimes \St_{\GG_{n-3}}(\Field;R) \\
&\longrightarrow \St_{\GL_2}(\Field;R) \otimes \St_{\GG_{n-2}}(\Field;R),
\end{align*}
where the maps are as follows:
\begin{compactitem}
\item The first map is the tensor product of the Reeder projection map
$\St_{\GL_3}(\Field;R) \rightarrow \St_{\GL_2}(\Field;R) \otimes \St_{\GL_1}(\Field;R)$
and the identity map $\St_{\GG_{n-3}}(\Field;R) \rightarrow \St_{\GG_{n-3}}(\Field;R)$.
\item The second map is the tensor product of the identity map
$\St_{\GL_2}(\Field;R) \rightarrow \St_{\GL_2}(\Field;R)$ 
and the Reeder product map 
$\St_{\GL_1}(\Field;R) \otimes \St_{\GG_{n-3}}(\Field;R) \rightarrow \St_{\GG_{n-2}}(\Field;R)$.
\end{compactitem}
By the above, it is enough to prove that $\Psi = \Phi \circ (\hkappa_m \otimes \text{id})$.

Define $\Psi'$ to be the composition
\begin{align*}
\St_{\GL_3}(\Field;R) \otimes \St_{\GG_{n-3}}(\Field;R)
&\longrightarrow \St_{\GG_n}(\Field;R) \\
&\longrightarrow \St_{\GL_2}(\Field;R) \otimes \St_{\GG_{n-2}}(\Field;R),
\end{align*}
where the first map is the Reeder product map and the second map is the Reeder
projection map.  From its definition, we see that 
$\Psi = \Psi' \circ (\hkappa_m \otimes \text{id})$.  We thus see that it is enough to
prove that $\Psi' = \Phi$.

Define $U = \UniGL_3^2(\Field)$ 
to be the unipotent radical of the parabolic subgroup $\ParaGL_3^2(\Field)$
of $\GL_3(\Field)$ (despite the bad notation, this is not the projective general linear
group).  
Using Theorem \ref{theorem:reeder} as in Remark \ref{remark:reederdecompose}, we see
that
\[\St_{\GL_3}(\Field;R) = \bigoplus_{u \in U} u \cdot \left(\St_{\GL_2}(\Field;R) \otimes \St_{\GL_1}(\Field;R)\right).\]
Consider $u \in U$ and $x \in \St_{\GL_2}(\Field;R)$ and $y \in \St_{\GL_1}(\Field;R)$
and $z \in \St_{\GG_{n-3}}(\Field;R)$.  Examining the definition of
$\Psi'$, we see that
\[\Psi'\left(\left(u \cdot \left(x \otimes y\right)\right) \otimes z\right) = x \otimes \left(y \otimes z\right),\]
where $y \otimes z \in \St_{\GL_1}(\Field;R) \otimes \St_{\GG_{n-3}}(\Field;R)$ is
identified with an element of $\St_{\GG_{n-2}}(\Field;R)$ using the Reeder product map.  But
this equals $\Phi\left(\left(u \cdot \left(x \otimes y\right)\right) \otimes z\right)$,
as desired.
\end{proof}

\subsection{Summary of where we are}
\label{section:differentialsummary}

The notation is as in the beginning of \S \ref{section:differential}.  Recall that
Lemma \ref{lemma:killexceptional} asserts that 
the differential $E^1_{2,i-1} \rightarrow E^1_{1,i-1}$
is surjective.  Let
$\partial$ be as in \S \ref{section:differentialidentify}.  Also, let $\zeta_m$
and $\hkappa_m$ be as in \S \ref{section:differentialisomorphism}.  Define
\[\zeta\colon \St_{\GL_3}(\Field;R) \rightarrow \St_{\GL_2}(\Field;R)\]
via the formula $\zeta = \zeta_1 - \zeta_2 + \zeta_3$.  Combining
Lemmas \ref{lemma:describedifferential} and \ref{lemma:identifypartiali}, we see
that to prove Lemma \ref{lemma:killexceptional}, it is enough to show that the map
\[\St_{\GL_3}(\Field;R) \otimes \HH_{i-1}(\GG_{n-3}(\Field);\St_{\GG_{n-3}}(\Field;R))
\rightarrow \St_{\GL_2}(\Field;R) \otimes \HH_{i-1}(\GG_{n-2}(\Field);\St_{\GG_{n-2}}(\Field;R))\]
obtained as the tensor product of $\zeta$ and the stabilization
map
\[\HH_{i-1}(\GG_{n-3}(\Field);\St_{\GG_{n-3}}(\Field;R)) \rightarrow \HH_{i-1}(\GG_{n-2}(\Field);\St_{\GG_{n-2}}(\Field;R))\]
is surjective.  One of the assumptions in Lemma \ref{lemma:killexceptional} is that this
stabilization map is surjective.  To prove that lemma, it is thus enough to prove the
following.

\begin{lemma}
\label{lemma:zetasurjective}
Let the notation be as above.  Then $\zeta$ is surjective.
\end{lemma}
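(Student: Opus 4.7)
The plan is to realize a generating set of $\St_{\GL_2}(\Field;R)$ inside the image of $\zeta$. The module $\St_{\GL_2}(\Field;R) = \RH_0(\Proj^1(\Field);R)$ is generated by the apartment classes $\Apartment{\vec{w}_1, \vec{w}_2} = [\Span{\vec{w}_1}] - [\Span{\vec{w}_2}]$ for ordered bases $(\vec{w}_1, \vec{w}_2)$ of $V := \Span{\vec{a}_1, \vec{a}_2}$, so it suffices to construct, for each such basis, a preimage $\xi \in \St_{\GL_3}(\Field;R)$ with $\zeta(\xi) = \Apartment{\vec{w}_1, \vec{w}_2}$.

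My candidate preimage is $\xi := \Apartment{\vec{w}_1, \vec{w}_2, \vec{a}_3}$. Because $\vec{w}_1, \vec{w}_2 \in V$ and the third vector is exactly $\vec{a}_3$, this apartment class lies in the identity summand of the Reeder decomposition $\St_{\GL_3}(\Field;R) = \bigoplus_{u \in \UniGL_3^2(\Field)} u \cdot (\St_{\GL_2}(\Field;R) \otimes \St_{\GL_1}(\Field;R))$; by definition of the Reeder projection, $\zeta_3(\xi) = \Apartment{\vec{w}_1, \vec{w}_2}$. The central task is then to compute the corrections $\zeta_1(\xi)$ and $\zeta_2(\xi)$ and to show that the alternating sum $\zeta(\xi) = \zeta_1(\xi) - \zeta_2(\xi) + \zeta_3(\xi)$ still equals $\Apartment{\vec{w}_1, \vec{w}_2}$.

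Using the defining action of $\hkappa_m$ and the antisymmetry and scaling of apartment classes, I would rewrite $\hkappa_m \xi$ as an apartment class whose underlying frame has at most one line inside $V$; this is not in standard form for the Reeder decomposition. Applying the Solomon--Tits apartment relations of \S~\ref{section:apartments}---with an auxiliary fourth vector chosen in general position with respect to the frame---one expands $\hkappa_m \xi$ as an $R$-linear combination of standard-form apartment classes $\Apartment{\vec{u}, \vec{v}, \vec{a}_3 + \vec{x}}$ (with $(\vec{u}, \vec{v})$ a basis of $V$ and $\vec{x} \in V$), from which the identity-summand contribution---the Reeder projection---can be read off directly. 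I expect the standard-form terms appearing in $\zeta_1(\xi)$ and $\zeta_2(\xi)$ either to lie individually in non-identity summands (and so vanish under projection) or else to match each other thanks to the $\Sigma_2$-symmetry interchanging $\vec{a}_1$ and $\vec{a}_2$, which intertwines $\hkappa_1$ and $\hkappa_2$; in either case $\zeta_1(\xi) - \zeta_2(\xi) = 0$.

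The principal obstacle is the combinatorial bookkeeping of the Solomon--Tits expansion, which can produce many nested terms and whose behavior is sometimes sensitive to $\Char(\Field)$ (e.g.\ when an auxiliary vector becomes degenerate in characteristic $2$). If the expected cancellation does not occur directly, the backup plan is to modify the preimage $\xi$ by subtracting a known-in-image correction term---built from apartment classes handled in simpler preceding sub-cases (such as when $(\vec{w}_1, \vec{w}_2)$ lies along the standard coordinate lines $\Span{\vec{a}_1}$ and $\Span{\vec{a}_2}$)---so that the modified preimage $\xi'$ satisfies $\zeta(\xi') = \Apartment{\vec{w}_1, \vec{w}_2}$ and surjectivity still holds.
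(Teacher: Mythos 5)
Your plan has the right starting point---choose $\xi = \Apartment{\vec{w}_1, \vec{w}_2, \vec{a}_3}$ with $(\vec{w}_1,\vec{w}_2)$ a basis of $V$ so that $\zeta_3(\xi)$ is the target apartment class---but the central claim that $\zeta_1(\xi) - \zeta_2(\xi) = 0$ is false, and this is where the proof fails as stated. Take $\vec{w}_1 = \vec{a}_1$, $\vec{w}_2 = a\vec{a}_1 + \vec{a}_2$ with $a \neq 0$, so $\xi = \Apartment{1&a&0\\0&1&0\\0&0&1}$. Working out the matrix products, $\hkappa_1 \xi$ reduces (after column permutation and scaling) to $-\Apartment{1&0&0\\0&1&0\\0&a&1}$; the apartment relation of Lemma~\ref{lemma:relation} applied to the $3 \times 4$ matrix with columns $\vec{a}_1, \vec{a}_2, \vec{a}_2 + a\vec{a}_3, \vec{a}_3$ rewrites this as $-\Apartment{1&0&0\\0&1&0\\0&0&1} + \Apartment{1&0&0\\0&1&a^{-1}\\0&0&1}$, and the Reeder projection kills the whole thing, so $\zeta_1(\xi) = 0$. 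On the other hand, $\hkappa_2\xi$ reduces to $-\Apartment{1&0&a\\0&1&0\\0&0&1}$, which lies in the identity summand and has $\zeta_2(\xi) = -\Apartment{1&0\\0&1} = -A_0$. Hence $\zeta_1(\xi) - \zeta_2(\xi) = A_0$, not $0$, and $\zeta(\xi) = A_a + A_0$ rather than $A_a$. The $\Sigma_2$-symmetry argument does not rescue this: the transposition $\tau$ swapping $\vec{a}_1, \vec{a}_2$ conjugates $\hkappa_1$ into $\hkappa_2$ but does not fix your $\xi$, and it acts nontrivially on the Reeder projection's target, so the two terms need not cancel.

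Your escape hatch---subtract a correction term built from simpler cases---is indeed what makes the argument go through, and is essentially what the paper does: establish separately that $A_0 \in \Image(\zeta)$ (the case $\vec{w}_1 = \vec{a}_1, \vec{w}_2 = \vec{a}_2$ genuinely does have the full cancellation you expect), and then conclude from $\zeta(\xi) = A_a + A_0$ that $A_a \in \Image(\zeta)$ as well. But as written the proposal presents a false main claim and a contingency too vague to stand on its own; to be a proof you must carry out the calculation, discover that the extra $A_0$ appears, and explicitly invoke the $a = 0$ case to cancel it.
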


\subsection{Apartments}
\label{section:apartments}

Before we prove Lemma \ref{lemma:zetasurjective}, we need to discuss some background
material on the Steinberg representation.  Unlike the previous sections, in this
section $n \geq 1$ is arbitrary.  Recall that 
$\St_{\GL_n}(\Field;R) = \RH_{n-2}(\Tits_{\GL_n}(\Field);R)$, where
$\Tits_{\GL_n}(\Field)$ is the Tits building associated to $\GL_n(\Field)$.  
This building can be described as the simplicial complex whose $r$-simplices
are flags
\[0 \subsetneq V_0 \subsetneq \cdots \subsetneq V_r \subsetneq \Field^n\]
of nonzero proper subspaces of $\Field^n$.

The Solomon--Tits theorem \cite[Theorem IV.5.2]{BrownBuildings} says that the $R$-module
$\St_{\GL_n}(\Field;R)$ is spanned by {\em apartment classes}, which are defined as follows.
Consider an $n \times n$ matrix $B$ with entries in $\Field^n$ none of whose columns are
identically $0$.  Let 
$(\vec{v}_1,\ldots,\vec{v}_n)$ be the columns of $B$.
Let $S_n$ be the simplicial complex whose $r$-simplices are chains
\[
0 \subsetneq I_0 \subsetneq \cdots \subsetneq I_r \subsetneq \{1,\ldots,n\}.
\]
The complex $S_n$ is isomorphic to the barycentric subdivision of the boundary of an $(n-1)$-simplex; in particular,
$S_n$ is homeomorphic to an $(n-2)$-sphere.  There is a simplicial map $f \colon S_n \rightarrow \Tits_{\GL_n}(\Field)$ defined
via the formula
\[
f(I) = \langle \text{$\vec{v}_i \mid i \in I$} \rangle \quad \quad (\emptyset \subsetneq I \subsetneq \{1,\ldots,n\}).
\]
The {\em apartment class} corresponding to $B$, denoted
$\Apartment{B}$, is the image of the fundamental class
$[S_n] \in \RH_{n-2}(S_n; R) = R$ under the map
$f_{\ast} \colon \RH_{n-2}(S_n; R) \rightarrow \RH_{n-2}(\Tits_{\GL_n}(\Field);R) = \St_{\GL_n}(\Field;R)$.

\begin{remark}
We have $\Apartment{B} = 0$ if the $\vec{v}_i$ 
do not form a basis for $\Field^n$, i.e.\ if $B$ is not invertible.
\end{remark}

Permuting the columns of $B$ changes $\Apartment{B}$ by the sign of the permutation, and
multiplying a column of $B$ by a nonzero scalar does not change $\Apartment{B}$.
The apartment classes also satisfy the following more interesting relation.

\begin{figure}
\begin{center}
\newcommand{\hide}[1]{#1}
\hide{
\begin{tikzpicture}
\newcommand*{\shift}{3}
\foreach \i in {0,1,2,3} {
\coordinate (A) at (0+0.5*\shift*\i,0);
\coordinate (B) at (1+0.5*\shift*\i,0);
\coordinate (C) at (0.5+0.5*\shift*\i,0.75);
\draw ($2*(A)$) -- ($2*(B)$) -- ($2*(C)$) -- cycle;
\foreach \j in {(A), (B), (C)}
{ \foreach \k in {(A), (B), (C)}
{ \filldraw \j+\k circle (2pt);   }; };
\pgfmathsetmacro\resulta{int(mod(\i+1,4))};
\pgfmathsetmacro\resultb{int(mod(\i+2,4))};
\pgfmathsetmacro\resultc{int(mod(\i+3,4))};
\node[below] at ($2*(A)$) {$v_{\pgfmathprintnumber{\resulta}}$};
\node[below] at ($2*(B)$) {$v_{\pgfmathprintnumber{\resultc}}$};
\node[above] at ($2*(C)$) {$v_{\pgfmathprintnumber{\resultb}}$};
\node at (1+\shift*\i,0.6) {$B_\i$};
}
\node at (2.5,.7) {$-$};
\node at (5.5,.7) {$+$};
\node at (8.5,.7) {$-$};
\node at (11.5,.7) {$=$};
\renewcommand{\shift}{5.7}
\coordinate (D) at (\shift+1,1);
\coordinate (E) at (\shift+0.5,0.25);
\coordinate (F) at (\shift+1.3,-0.25);
\coordinate (G) at (\shift+1.5,0.25);
\draw ($2*(D)$) -- ($2*(E)$) -- ($2*(F)$) -- ($2*(G)$) -- cycle;
\draw ($2*(D)$) -- ($2*(F)$);
\foreach \j in {(D), (E), (F), (G)}
{ \foreach \k in {(D), (E), (F), (G)}
{ \filldraw \j+\k circle (2pt);   }; };
\draw[dashed] ($2*(E)$) -- ($2*(G)$);
\node[above] at ($2*(D)$) {$v_0$};
\node[left] at ($2*(E)$) {$v_1$};
\node[below] at ($2*(F)$) {$v_2$};
\node[right] at ($2*(G)$) {$v_3$};
\end{tikzpicture}}
\end{center}
\vspace{-.3in}
\caption{As we illustrate here in the case $n=3$, the apartment
classes corresponding to the $B_i$ can be placed on the boundary of an $n$-dimensional simplex such that their simplices
cancel in pairs.  In the picture, the vertices labeled with the vectors $\vec{v}_i$ are taken to the lines
spanned by the $\vec{v}_i$ while the unlabeled vertices are taken to the $2$-dimensional subspaces spanned
by the vectors on their two neighbors.}
\label{figure:apartmentscancel}
\end{figure}

\begin{lemma}
\label{lemma:relation}
Let $\Field$ be a field, let $R$ be a commutative ring, and let $n \geq 2$.  Let $B$
be an $n \times (n+1)$-matrix with entries in $\Field$.  Assume that none of the columns
of $B$ are identically $0$.  Ordering the columns of $B$ from $0$ to $n$, for 
$0 \leq m \leq n$ let $B_m$ be the result of deleting
the $m^{\text{th}}$ column from $B$.  Then
$\Apartment{B_0} - \Apartment{B_1} + \Apartment{B_2} - \cdots + (-1)^n \Apartment{B_{n}} = 0$.
\end{lemma}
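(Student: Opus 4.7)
The plan is to derive the relation from the identity $\partial^2 = 0$ applied to a simplicial map out of the barycentric subdivision of an $n$-simplex.

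Let $K$ be the order complex of nonempty subsets of $\{0, 1, \ldots, n\}$, that is, the barycentric subdivision of the $n$-simplex on vertex set $\{0, \ldots, n\}$; then $K$ is an $n$-ball and $\partial K$ is the barycentric subdivision of $\partial \Delta^n$, a simplicial $(n-1)$-sphere. Let $\overline{\Tits}_{\GL_n}(\Field)$ be the $(n-1)$-dimensional simplicial complex obtained from $\Tits_{\GL_n}(\Field)$ by adjoining the vertex $\Field^n$ and all chains $V_0 \subsetneq \cdots \subsetneq V_{r-1} \subsetneq \Field^n$; this is the simplicial cone on $\Tits_{\GL_n}(\Field)$ with apex $\Field^n$. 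The assignment $I \mapsto \langle \vec{v}_i : i \in I\rangle$ defines a simplicial (poset) map $\tilde{f}\colon K \to \overline{\Tits}_{\GL_n}(\Field)$: it is well-defined because no $\vec{v}_i$ is zero and because inclusion of index sets goes to containment of subspaces.

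The crucial observation is a dimension count: $K$ has dimension $n$ while $\overline{\Tits}_{\GL_n}(\Field)$ has dimension $n-1$, so $C_n(\overline{\Tits}_{\GL_n}(\Field)) = 0$ and hence $\tilde{f}_{\ast}[K] = 0$. Since $\tilde{f}_{\ast}$ is a chain map, this forces $\tilde{f}_{\ast}[\partial K] = \partial \tilde{f}_{\ast}[K] = 0$ at the chain level in $C_{n-1}(\overline{\Tits}_{\GL_n}(\Field))$.

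To translate this vanishing into the relation among apartments, I expand the fundamental cycle facet by facet as $[\partial K] = \sum_{m=0}^n (-1)^m [\mathrm{sd}(\Delta^{n-1}_m)]$, where $\Delta^{n-1}_m$ is the facet of $\partial \Delta^n$ opposite vertex $m$. Each subdivided facet is a simplicial cone $[\mathrm{sd}(\Delta^{n-1}_m)] = [S_n^{(m)}] \ast w_m$ with apex the barycenter $w_m = \{0,\ldots,n\}\setminus\{m\}$, where $S_n^{(m)}$ is the order complex of proper nonempty subsets of $\{0, \ldots, n\} \setminus \{m\}$. By the very definition of the apartment class, $\tilde{f}_{\ast}[S_n^{(m)}] = \Apartment{B_m}$, and $\tilde{f}(w_m) = v_m := \langle \vec{v}_i : i \neq m\rangle$. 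Combining these, the chain identity becomes
\[
\sum_{m=0}^{n}(-1)^m (\Apartment{B_m} \ast v_m) = 0 \quad \text{in } C_{n-1}(\overline{\Tits}_{\GL_n}(\Field)).
\]

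To finish, I take $\partial$ of both sides. Using the simplicial cone formula $\partial(c \ast v) = (\partial c) \ast v + (-1)^{\dim c + 1} c$ together with the fact that each $\Apartment{B_m}$ is a cycle, the boundary simplifies to $(-1)^{n-1}\sum_{m=0}^{n} (-1)^m \Apartment{B_m}$, and setting this equal to zero yields the desired relation. The main obstacle is simply the careful bookkeeping of signs in the cone formula and in the decomposition of $[\partial K]$; a minor subtlety is that when $B_m$ fails to be invertible, some terms in $\Apartment{B_m} \ast v_m$ become degenerate simplices, but this is harmless because $\Apartment{B_m}$ already vanishes in that case and degenerate simplices contribute zero.
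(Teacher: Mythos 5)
Your proof is correct, and it formalizes essentially the same geometric picture that the paper gestures at in Figure~\ref{figure:apartmentscancel}, but by a noticeably different mechanism. The paper's argument (as the caption indicates) is a direct cancellation at the level of $(n-2)$-simplices: each nondegenerate $(n-2)$-simplex in the image of some apartment map lies in exactly two of the $\Apartment{B_m}$ (the two values of $m$ outside the top index set of the corresponding chain), and the signs $(-1)^m$ make these two occurrences cancel. This requires some bookkeeping to see, especially when $B$ is degenerate and the map collapses simplices. Your argument instead works one dimension up: you extend the apartment maps to a single simplicial map $\tilde f$ from $\mathrm{sd}(\Delta^n)$ into the \emph{cone} $\overline{\Tits}_{\GL_n}(\Field)$ on the Tits building, observe that the top chain $\tilde f_*[K]$ dies automatically because the target has dimension $n-1$, and then extract the relation by taking $\partial$ of the (vanishing) pushforward of $[\partial K]$ via the cone formula. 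The dimension count neatly replaces the explicit pair-cancellation and handles all degeneracies uniformly, at the cost of introducing the auxiliary coned building and having to track the cone identity $[\mathrm{sd}(\Delta^{n-1}_m)] = [S_n^{(m)}] * w_m$.

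Two small points worth tightening if this were written out in full. First, the identity $\tilde f_*[\mathrm{sd}(\Delta^{n-1}_m)] = \Apartment{B_m} * v_m$ should be checked to hold with the degenerate-simplices-are-zero convention on both sides; this does go through (a simplex of $S_n^{(m)}$ mapping degenerately gives zero, and coning with $v_m$ preserves this), but it is exactly the sort of spot where the argument needs care. Second, the cone boundary formula $\partial(c * v) = (\partial c)*v + (-1)^{\dim c + 1} c$ continues to hold in the oriented chain complex of a simplicial complex even when some simplices of $c*v$ are degenerate (one checks that in such a case the two terms on the right cancel), so using it on $\Apartment{B_m} * v_m$ is legitimate even when $B_m$ is singular. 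With those caveats acknowledged, the proof is sound.
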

\begin{proof}
The simplices forming the apartment classes $\Apartment{B_i}$ cancel in pairs; see Figure \ref{figure:apartmentscancel}.
\end{proof}

The Solomon--Tits theorem \cite[Theorem IV.5.2]{BrownBuildings} gives the following basis for $\St_{\GL_n}(\Field;R)$.

\begin{theorem}[Solomon--Tits]
\label{theorem:uppertriangular}
Let $\Field$ be a field, let $R$ be a commutative ring, and let $n \geq 1$.  Then $\St_{\GL_n}(\Field;R)$ is a free
$R$-module on the basis consisting of all $\Apartment{B}$ such that $B$ is an
upper unitriangular matrix in $\GL_n(\Field)$.
\end{theorem}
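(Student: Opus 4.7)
My plan is to invoke the Solomon--Tits theorem in its strongest form from \cite[Theorem~IV.5.2]{BrownBuildings}, which provides a free basis for the top reduced homology of a spherical building indexed by apartments containing a fixed chamber. Taking the fixed chamber to be $C_0$, the top simplex of $\Tits_{\GL_n}(\Field)$ corresponding to the complete flag $0 \subsetneq \langle \vec{a}_1 \rangle \subsetneq \cdots \subsetneq \langle \vec{a}_1,\ldots,\vec{a}_{n-1}\rangle$, the problem reduces to showing that apartments of $\Tits_{\GL_n}(\Field)$ containing $C_0$ are in bijection with upper unitriangular matrices $B \in \GL_n(\Field)$, and that this bijection matches the general apartment-class construction $B \mapsto \Apartment{B}$.

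One direction is immediate: if $B$ is upper unitriangular with columns $\vec{v}_1,\ldots,\vec{v}_n$, then $\langle \vec{v}_1,\ldots,\vec{v}_k \rangle = \langle \vec{a}_1,\ldots,\vec{a}_k\rangle$ for every $k$, so the standard flag appears as a face of the apartment corresponding to $B$, placing $C_0$ inside that apartment. For the reverse direction, I would use that each apartment $\Sigma$ containing $C_0$ contains a unique chamber opposite $C_0$, namely a complete flag $F'_\bullet$ satisfying $F'_{n-k} \cap \langle \vec{a}_1,\ldots,\vec{a}_k\rangle = 0$ for all $k$. Given such a flag, define $L_j := F'_{n-j+1} \cap \langle \vec{a}_1,\ldots,\vec{a}_j\rangle$ (with the convention $F'_n := \Field^n$); a dimension count together with the opposite condition shows each $L_j$ is one-dimensional and disjoint from $\langle \vec{a}_1,\ldots,\vec{a}_{j-1}\rangle$, hence contains a unique vector $\vec{v}_j$ whose $\vec{a}_j$-coefficient equals $1$. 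The resulting matrix $B$ with columns $\vec{v}_1,\ldots,\vec{v}_n$ is upper unitriangular, and a direct check shows that the apartment associated to $B$ is $\Sigma$. Mutual inverseness of the two constructions gives the desired bijection.

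The main obstacle, which I would treat as a black box, is the cited form of the Solomon--Tits theorem itself; its proof typically proceeds by deformation retracting $\Tits_{\GL_n}(\Field)$ onto the union of apartments through $C_0$ and then analyzing this union as a wedge of $(n-2)$-spheres glued along $C_0$. The rest is bookkeeping: the upper-unitriangular normalization captures precisely the canonical choice of representative vectors for the lines in the direct sum decomposition of $\Field^n$ determined by an apartment containing $C_0$. The freeness over an arbitrary commutative ring $R$ (rather than just over $\Z$) is automatic since the basis can be exhibited integrally, and tensoring with $R$ preserves freeness.
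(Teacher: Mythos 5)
Your proposal is correct and takes essentially the same route as the paper, which simply cites Brown's Solomon--Tits theorem \cite[Theorem IV.5.2]{BrownBuildings} and leaves the translation to upper unitriangular matrices implicit. Your explicit identification of apartments through the standard chamber $C_0$ with upper unitriangular matrices, via the lines $L_j = F'_{n-j+1} \cap \langle \vec{a}_1,\ldots,\vec{a}_j\rangle$ extracted from the chamber opposite $C_0$, is exactly the bookkeeping needed, and your appeal to universal coefficients for the freeness over an arbitrary $R$ is sound since all lower reduced homology of the building vanishes.
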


\subsection{The proof of Lemma \ref{lemma:zetasurjective}}
\label{section:zetasurjective}

We finally prove Lemma \ref{lemma:zetasurjective}, which as discussed in
\S \ref{section:differentialsummary} suffices to prove Lemma \ref{lemma:killexceptional}.
First, we recall its statement.
For $1 \leq m \leq 3$, let $\zeta_m$
and $\hkappa_m$ be as in \S \ref{section:differentialisomorphism}.  Define
\[\zeta\colon \St_{\GL_3}(\Field;R) \rightarrow \St_{\GL_2}(\Field;R)\]
via the formula $\zeta = \zeta_1 - \zeta_2 + \zeta_3$.
Our goal is to prove that $\zeta$ is surjective.

Before we do that, we introduce some formulas.  Let 
$\pi\colon \St_{\GL_3}(\Field;R) \rightarrow \St_{\GL_2}(\Field;R)$ be the composition
\[\St_{\GL_3}(\Field;R) \longrightarrow \St_{\GL_2}(\Field;R) \otimes \St_{\GL_1}(\Field;R) 
\stackrel{\cong}{\longrightarrow} \St_{\GL_2}(\Field;R),\]
where the first arrow is the Reeder projection map and the second arrow comes from the fact
that $\St_{\GL_1}(\Field;R) = R$.  From its definition, we see that
\[\pi(\Apartment{1 & x & y \\ 0 & 1 & z \\ 0 & 0 & 1}) = \Apartment{1 & x \\ 0 & 1}\]
for all $x,y,z \in \Field$.  What is more, for all $3 \times 3$ matrices $B$ none
of whose columns are identically $0$ we have
\[\zeta(\Apartment{B}) = \pi(\hkappa_1(B)) - \pi(\hkappa_2(B)) + \pi(\hkappa_3(B)).\]
Here the $\hkappa_m$ act on $B$ via matrix multiplication.

We now turn to proving that $\zeta$ is surjective.
Consider $a \in \Field$, and set
\[A_a = \Apartment{1 & a \\ 0 & 1}.\]
By Theorem \ref{theorem:uppertriangular}, it is enough to prove that
$A_a \in \Image(\zeta)$. We have
\begin{equation} \label{eqn:apartmentcalc}
\begin{split}
\zeta ( \Apartment{1 & a & 0 \\ 0 & 1 & 0 \\ 0 & 0 & 1})
&= \pi( \Apartment{0 & 0 & -1 \\ 0 & 1 & 0 \\ 1 & a & 0})
-  \pi( \Apartment{1 & a & 0 \\ 0 & 0 & -1 \\ 0 & 1 & 0})
+  \pi( \Apartment{1 & a & 0 \\ 0 & 1 & 0 \\ 0 & 0 & 1}) \\
&= \pi(-\Apartment{-1 & 0 & 0 \\ 0 & 1 & 0 \\ 0 & a & 1})
-  \pi(-\Apartment{1 & 0 & a \\ 0 & -1 & 0 \\ 0 & 0 & 1})
+  \pi( \Apartment{1 & a & 0 \\ 0 & 1 & 0 \\ 0 & 0 & 1}) \\
&= \pi(-\Apartment{1 & 0 & 0 \\ 0 & 1 & 0 \\ 0 & a & 1})
-  \pi(-\Apartment{1 & 0 & a \\ 0 & 1 & 0 \\ 0 & 0 & 1})
+  \pi( \Apartment{1 & a & 0 \\ 0 & 1 & 0 \\ 0 & 0 & 1}) \\
&= -\pi(\Apartment{1 & 0 & 0 \\ 0 & 1 & 0 \\ 0 & a & 1})
+ \Apartment{1 & 0 \\ 0 & 1}
+ \Apartment{1 & a \\ 0 & 1}, 
\end{split}
\end{equation}
where the second equality uses the fact that permuting the columns of a matrix
changes the associated apartment by the sign of the permutation and the third
equality uses the fact that multiplying a column by a nonzero scalar 
does not change the associated apartment.  

If $a=0$, then the right hand side of \eqref{eqn:apartmentcalc} simplifies to
\[-\Apartment{1 & 0 \\ 0 & 1}
+ \Apartment{1 & 0 \\ 0 & 1}
+ \Apartment{1 & 0 \\ 0 & 1} = \Apartment{1 & 0 \\ 0 & 1} = A_0,\]
so $A_0 \in \Image(\zeta)$.
Assume now that $a \neq 0$.  Plugging the matrix
\[\begin{pmatrix} 1 & 0 & 0 & 0 \\ 0 & 1 & 1 & 0 \\ 0 & 0 & a & 1 \end{pmatrix}\]
into Lemma \ref{lemma:relation}, we get the relation
\begin{align*}
0 &= \Apartment{0 & 0 & 0 \\ 1 & 1 & 0 \\ 0 & a & 1}
-    \Apartment{1 & 0 & 0 \\ 0 & 1 & 0 \\ 0 & a & 1}
+    \Apartment{1 & 0 & 0 \\ 0 & 1 & 0 \\ 0 & 0 & 1}
-    \Apartment{1 & 0 & 0 \\ 0 & 1 & 1 \\ 0 & 0 & a} \\
&= 0
- \Apartment{1 & 0 & 0 \\ 0 & 1 & 0 \\ 0 & a & 1}
+ \Apartment{1 & 0 & 0 \\ 0 & 1 & 0 \\ 0 & 0 & 1}
- \Apartment{1 & 0 & 0 \\ 0 & 1 & a^{-1} \\ 0 & 0 & 1},
\end{align*}
where the equality uses the fact that the columns of the first matrix are not linearly
independent and the fact that multiplying a column of a matrix by a nonzero scalar
does not change the associated apartment.  
Plugging this relation into \eqref{eqn:apartmentcalc}, we see that
the right hand side of \eqref{eqn:apartmentcalc} equals
\begin{align*}
&- \left(\pi(\Apartment{1 & 0 & 0 \\ 0 & 1 & 0 \\ 0 & 0 & 1}) - \pi(\Apartment{1 & 0 & 0 \\ 0 & 1 & a^{-1} \\ 0 & 0 & 1})\right) + \Apartment{1 & 0 \\ 0 & 1} + \Apartment{1 & a \\ 0 & 1} \\
=& -\Apartment{1 & 0 \\ 0 & 1} + \Apartment{1 & 0 \\ 0 & 1} + \Apartment{1 & 0 \\ 0 & 1} + \Apartment{1 & a \\ 0 & 1} \\
=& A_0 + A_a.
\end{align*}
Since we have already seen that $A_0 \in \Image(\zeta)$, we deduce that $A_a \in \Image(\zeta)$, as desired.

\begin{footnotesize}
\noindent
\begin{tabular*}{\linewidth}[t]{@{}p{\widthof{Department of Mathematics}+0.5in}@{}p{\widthof{Department of Mathematics}+0.5in}@{}p{\linewidth - \widthof{Department of Mathematics} - \widthof{Department of Mathematics} - 1in}@{}}
{\raggedright
Avner Ash\\
Department of Mathematics\\
Boston College\\
Chestnut Hill, MA 02467-3806\\
{\tt ashav@bc.edu}}
&
{\raggedright
Andrew Putman\\
Department of Mathematics\\
University of Notre Dame \\
279 Hurley Hall\\
Notre Dame, IN 46556\\
{\tt andyp@nd.edu}}
&
{\raggedright
Steven V Sam\\
Department of Mathematics\\
University of Wisconsin\\
480 Lincoln Dr.\\
Madison, WI 53706-1325\\
{\tt svs@math.wisc.edu}}
\end{tabular*}\hfill
\end{footnotesize}

\end{document}